\title[Fourier-domain Variational Formulation for Supervised Learning]{Fourier-domain Variational Formulation and Its Well-posedness for Supervised Learning}
\newtheorem{thm}{Theorem}
\newtheorem{lem}{Lemma}
\newtheorem{prop}{Proposition}
\newtheorem{rmk}{Remark}
\newtheorem{exam}{Example}
\newtheorem{prob}{Problem}
\author{% 
	\Name{Tao Luo}\thanks{Corresponding author} \Email{luotao41@sjtu.edu.cn}\\
	\Name{Zheng Ma} \Email{zhengma@sjtu.edu.cn}\\
	\Name{Zhiwei Wang} \Email{victorywzw@sjtu.edu.cn}\\
	\Name{Zhi-Qin John Xu} \Email{xuzhiqin@sjtu.edu.cn}\\
	\Name{Yaoyu Zhang} \Email{zhyy.sjtu@sjtu.edu.cn}\\
	\addr School of Mathematical Sciences, Institute of Natural Sciences, MOE-LSC and Qing Yuan Research Institute, \\
	Shanghai Jiao Tong University, Shanghai, 200240, P.R. China%
}
 \let\Ginclude@graphics\@org@Ginclude@graphics
\begin{document}

\maketitle

\begin{abstract}%
    A supervised learning problem is to find a function in a hypothesis function space given values on isolated data points. Inspired by the frequency principle in neural networks, we propose a Fourier-domain variational formulation for supervised learning problem. This formulation circumvents the difficulty of imposing the constraints of given values on isolated data points in continuum modelling. Under a necessary and sufficient condition within our unified framework, we establish the well-posedness of the Fourier-domain variational problem, by showing a critical exponent depending on the data dimension. In practice, a neural network can be a convenient way to implement our formulation, which automatically satisfies the well-posedness condition.  
\end{abstract}

\begin{keywords}%
  Fourier-domain variational problem, well-posedness, critical exponent, frequency principle, supervised learning%
\end{keywords}

%------------------------ 文章主体 -----------------------------
%------------------ section introduction -----------------
\section{Introduction\label{sec:Introduction}}  

%\begin{itemize}
%  \item Limit the main text (not counting references and appendices) to 15 single-column PMLR-formatted pages, using this template.
%  \item Include, either in the main text or the appendices, all details, proofs and derivations required to substantiate the results.
%  \item Include {\em in the main text} enough details, %including proof details,
%    to convince the reviewers of the contribution, novelty and significance of the submissions.
%  \item Do not include author names (this is done automatically), and to the extent possible, avoid directly identifying the authors.
%    You should still include all relevant references, including your own, and any other relevant discussion, even if this might allow a reviewer to infer the author identities.
%\end{itemize}

Supervised learning is ubiquitous. In a supervised learning problem, the goal is to find a function in a hypothesis function space given values on isolated data points with labels. In practice, Deep neural network (DNN), although with limit understanding, has been a powerful method. A series of works provide a good explanation for the good generalization of DNNs by showing a \emph{Frequency Principle (F-Principle), i.e., a DNN tends to learn a target function from low to high frequencies during the training}~\citep{xu_training_2018,xu2019frequency,rahaman2018spectral}.  The F-Principle shows a low-frequency bias of DNNs when fitting a given data set. In the neural tangent kernel regime \citep{jacot2018neural,lee2019wide}, later works show that the long-time training solution of a wide two-layer neural network is equivalent to the solution of a constrained Fourier-domain variational problem~\citep{zhang_explicitizing_2019,Luotao2020LFP}.

Inspired by above works about the F-Principle, in this paper, we propose a general Fourier-domain variational formulation for supervised learning problem and study its well-posedness. In continuum modelling, it is often difficult to impose the constraint of given values on isolated data points in a function space without sufficient regularity, e.g., a $L^p$ space. We circumvent this difficulty by regarding the Fourier-domain variation as the primal problem and the constraint of isolated data points is imposed through a linear operator. Under a necessary and sufficient condition within our unified framework, we establish the well-posedness of the Fourier-domain variational problem. We show that the well-posedness depends on a critical exponent, which equals to the data dimension. This is a stark difference compared with a traditional partial differential equation (PDE) problem. For example, in a boundary value problem of any PDE in a $d$-dimensional domain, the boundary data should be prescribed on the $(d-1)$-dimensional boundary of the domain, where the dimension $d$ plays an important role. However, in a well-posed supervised learning problem, the constraint is always on isolated points, which are $0$-dimensional independent of $d$, while the model has to satisfy a well-posedness condition depending on the dimension. In practice, a neural network can be a convenient way to implement our formulation, which automatically satisfies the well-posedness condition. With a clear understanding of its posedness, the Fourier-domain variational formulation also provides insight for designing methods for supervised learning problems. 

The rest of the paper is organized as follows. Section~\ref{section2} shows some related work. In section~\ref{section3}, we propose a Fourier-domain variational formulation for supervised learning problems. The necessary and sufficient condition for the well-posedness of our model is presented in section~\ref{section4}. Section~\ref{section5} is devoted to the numerical demonstration in which we solve the Fourier-domain variational problem using band-limited functions. Finally, we present a short conclusion and discussion in section~\ref{sec:conclusion}.
%--------------- related works --------------------------
\section{Related Works}\label{section2}

Our work, as a modelling for supervised learning, is related to the point cloud interpolation problem which belongs to semi-supervised learning. One of the most widely used methods for the point cloud problems is the 2-Laplacian method~\citep{zhu2003semi}, which is an approach based on a Gaussian random field and weighted graph model. But it has been observed~\citep{el2016asymptotic,Nadler2009SemisupervisedLW} that when the number of unlabeled data point is large, the graph Laplacian method is usually ill-posed. A new weighted Laplace method was proposed to overcome this shortcoming of the original 2-Laplacian method~\citep{2017Weighted}. 
% However, one cannot derive continuum models directly from these discrete models in the large-sample limit. 
\cite{2019Properly} further considered a way to correctly set the weights in Laplacian regularization with a exponent $\alpha>d$ and proved the well-posedness of the corresponding continuum model in the large-sample limit. We remark that our continuum model is proposed for the case of finite number of data points, i.e., $n<+\infty$, not the large-sample limit case.
% But they did not consider the ill-posedness of the solution in the case of $\alpha<d$.

% \citep{li2017point} proposed a point integral method to solve Poisson-type equations on $k$-dimensional manifolds. However, as a traditional PDE boundary value problem, their Poisson-type equation has a $(k-1)$-dimensional boundary. At the discrete level, in order to achieve effective accuracy, at least $O((1/\varepsilon)^{k-1})$ sampling points are required to make sure that the boundary can be covered by the $\varepsilon$-neighborhood of the sampling points. However, the actual point cloud problem does not have so much data. Therefore, we need a less restrictive model to solve the point cloud interpolation problem with sparse data points.

From extensive synthetic and realistic datasets, frequency principle is proposed to characterize the training process of deep neural networks \citep{xu_training_2018,rahaman2018spectral,xu2019frequency}. A series of theoretical works subsequently show that frequency principle holds in different settings, for example, a non-NTK (neural tangent kernel) regime with infinite samples \citep{luo2019theory} and the NTK regime with finite samples \citep{zhang_explicitizing_2019,bordelon2020spectrum,Luotao2020LFP} or infinite samples \citep{cao2019towards,basri2019convergence}. \cite{e2019machine} show that the integral equation would naturally leads to the frequency principle. The frequency principle inspires the design of deep neural networks to fast learn a function with high frequency \citep{liu2020multi,wang2020multi,jagtap2019adaptive,cai2019phasednn,s.20201019,li2020multi,wangbo2020multi}. 

% Another way to solve point cloud problems, or a supervised learning problem, is to use deep neural network (DNNs). DNNs have good performance in many fields, and it can achieve good generalization in point cloud interpolations. A good explanation for this phenomenon is \emph{Frequency Principle, i.e., a DNN tends to learn a target function from low to high frequencies during the training}~\citep{xu_training_2018,xu2019frequency}. And we can translate the long-time training solution of a wide two-layer ReLU neural network as a constrained Variational problem~\citep{zhang_explicitizing_2019,Luotao2020LFP}. We notice that the exponent of the frequency is between $d+1$ and $d+3$, which is a little bit larger than the dimension $d$ of the training data. We conjecture that this may be an important reason for the good generalization of DNNs. 

%----------------------- Section 3 -------------------------------
\section{Fourier-domain Variational Problem for Supervised Learning}\label{section3}
%------------------ subsection 3.1 ------------------------
\subsection{Motivation: Linear Frequency Principle}

In the following, we consider the regression problem of fitting a target function $f\in C_c(\sR^d)$. Clearly, $f\in L^{2}(\sR^d)$. Specifically,
we use a DNN, $h_{\mathrm{DNN}}(\vx,\vtheta(t))$,
to fit the training dataset $\{(\vx_{i},y_{i})\}_{i=1}^{n}$ of $n$ sample
points, where $\vx_{i}\in\sR^d$, $y_{i}=f(\vx_{i})$ for each $i$.
For the convenience of notation, we denote $\vX=(\vx_{1},\ldots,\vx_{n})^\T$,
$\vY=(y_{1},\ldots,y_{n})^\T$.
It has been shown in \citep{jacot2018neural,lee2019wide} that, if the number of neurons in each hidden layer is sufficiently large, then $\norm{\vtheta(t)-\vtheta(0)}\ll1$ for any $t\ge 0$. In such cases, the the following function
\begin{equation*}
h(\vx,\vtheta)=h_{\mathrm{DNN}}(\vx,\vtheta_{0})+\nabla_{\vtheta}h_{\mathrm{DNN}}\left(\vx,\vtheta_0\right)\cdot(\vtheta-\vtheta_{0}),\label{eq:linear}
\end{equation*}
is a very good approximation of DNN output $h_{\mathrm{DNN}}(\vx,\vtheta(t))$
with $\vtheta(0)=\vtheta_{0}$. Note that,
we have the following requirement for $h_{\mathrm{DNN}}$ which is
easily satisfied for common DNNs: for any $\vtheta\in\sR^{m}$,
there exists a weak derivative of $h_{\mathrm{DNN}}(\cdot,\vtheta_0)$
with respect to $\vtheta$ satisfying $\nabla_{\vtheta}h_{\mathrm{DNN}}(\cdot,\vtheta_0)\in L^{2}(\sR^d)$.

Inspired by the F-Principle and the linear dynamics in the kernel regime,
\citep{zhang_explicitizing_2019,Luotao2020LFP} derived a Linear F-Principle (LFP)
dynamics to effectively study the training dynamics of a two-layer ReLU NN with the mean square loss in the large width limit. Up to a multiplicative constant
in the time scale, the gradient
descent dynamics of a sufficiently wide two-layer NN is approximated by
\begin{equation}
\partial_{t}\fF[u](\vxi,t)=-\,(\gamma(\vxi))^{2}\fF[u_{\rho}](\vxi),\label{eq:ReLUnnFP}
\end{equation}
\noindent where $u(\vx,t)=h(\vx,t)-h_\mathrm{target}(\vx)$, $u_{\rho}(\vx)=u(\vx,t)\rho(\vx)$. We follow this work and further assume that $\rho(\vx)=\frac{1}{n}\sum_{i=1}^n\delta(\vx-\vx_i)$, accounting for the real case of a finite training dataset $\{(\vx_i,y_i)\}_{i=1}^n$, and
\begin{equation*}
(\gamma(\vxi))^{2}=\Exp_{a(0), r(0)}\left[\frac{r(0)^{3}}{16 \pi^{4}\norm{\vxi}^{d+3}}+\frac{a(0)^{2} r(0)}{4 \pi^{2}\norm{\vxi}^{d+1}}\right],
\end{equation*}
where $r(0)=\abs{\vw(0)}$ and the two-layer ReLU NN parameters at initial $a(0)$ and $\vw(0)$ are random variables with certain given distribution. 
In this work, for any function $g$ defined on $\sR^d$, we use the following convention of the Fourier transform and its inverse:
\begin{equation*}
\fF[g](\vxi)=\int_{\sR^d}g(\vx)\E^{-2\pi \I\vxi^\T\vx}\diff\vx,\quad g(\vx)=\int_{\sR^d}\fF[g](\vxi)\E^{2\pi \I\vx^\T\vxi}\diff\vxi.
\end{equation*}

\noindent Different from $\fF[u](\vxi,t)$ on the left hand side, the formula on the right hand side reads as
\begin{equation*}
\fF[u_\rho](\vxi,t)=\fF[u(\cdot,t)\rho(\cdot)](\vxi,t)=\frac{1}{n}\fF\left[\sum_{i=1}^{n}\left(h(\cdot,\vtheta(t))-y_{i}\right)\delta(\cdot-\vx_{i})\right](\vxi,t), 
\end{equation*}
which incorporates the information of the training dataset. The solution of the LFP model (\ref{eq:ReLUnnFP}) is equivalent to that of the following optimization problem in a proper hypothesis space $F_\gamma$,
\begin{equation*}
\min_{h-h_{\mathrm{ini}}\in F_{\gamma}}\int_{\sR^d}(\gamma(\vxi))^{-2}\abs{\fF[h](\vxi)-\fF[h_\mathrm{ini}](\vxi)}^{2}\diff{\vxi},\label{eq: minFPnorm-1}
\end{equation*}
subject to constraints $h(\vx_{i})=y_{i}$ for $i=1,\ldots,n$. The weight $(\gamma(\vxi))^{-2}$ grows as the frequency $\vxi$ increases, which means that a large penalty is imposed on the high frequency part of $h(\vx)-h_{\mathrm{ini}}(\vx)$. As we can see, a random non-zero initial output of DNN leads to a specific type of generalization error. To eliminate
this error, we use DNNs with an antisymmetrical initialization (ASI) trick ~\citep{zhang2020type}, which guarantees $h_{\mathrm{ini}}(\vx)=0$. Then the final output $h(\vx)$ is dominated by low frequency, and the DNN model possesses a good generalization.

%------------------ subsection 3.2 ------------------------
\subsection{Fourier-domain Variational Formulation}

Inspired by the variational formulation of LFP model, we propose a new continuum model for the supervised learning. This is a variational problem with a parameter $\alpha>0$ to be determined later:
\begin{align}
& \min_{h\in \fH} Q_{\alpha}[h] =\int_{\sR^d}\langle\vxi\rangle^\alpha\Abs{\fF[h](\vxi)}^{2}\diff{\vxi},\label{mini-prob} \\
& \mathrm{s.t.}\quad h(\vx_i)=y_i,\quad i=1,\cdots,n,
\end{align}
where $\langle\vxi\rangle=(1+\norm{\vxi}^2)^{\frac{1}{2}}$ is the ``Japanese bracket'' of $\vxi$ and $\fH=\{h(x)|\int_{\sR^d}\langle\vxi\rangle^\alpha\Abs{\fF[h](\vxi)}^{2}\diff{\vxi}<\infty\}$.
Note that in the spatial domain, the evaluation on $n$ known data points is meaningless in the sense of $L^2$ functions. Therefore, we consider the problem in the frequency domain and define a linear operator $\fP_{\vX}:L^1(\sR^d)\cap L^2(\sR^d)\to\sR^n$ for the given sample set $\vX$ to transform the original constraints into the ones in the Fourier domain: $\fP_{\vX}\phi^*=\vY$. More precisely, we define for $\phi\in L^1(\sR^d)\cap L^2(\sR^d)$
\begin{equation}
\fP_{\vX}\phi:=\left(\int_{\sR^d}\phi(\vxi)\E^{2\pi\I\vxi\cdot \vx_{1}}\diff{\vxi},\cdots,
\int_{\sR^d}\phi(\vxi)\E^{2\pi\I\vxi\cdot \vx_{n}}\diff{\vxi}\right)^\T.
\end{equation}
\noindent The admissible function class reads as 
\begin{equation*}
    \fA_{\vX,\vY}=\{\phi\in L^1(\sR^d)\cap L^2(\sR^d)\mid\fP_{\vX}\phi=\vY\}.
\end{equation*}
%And we extend the definition domain of $\fP_{\vX}$ to $\cup_{\vY\in R^d}\fA_{\vX,\vY}$ since the constraints $\fP_{\vX}=\vY$ is not always true in $\fA_{\vX,\vY}$.

Notice that
$\norm{\fF^{-1}[\phi]}_{H^{\frac{\alpha}{2}}}=\left(\int_{\sR^d}\langle\vxi\rangle^\alpha\abs{\phi(\vxi)}^{2}\diff{\vxi}\right)^{\frac{1}{2}}$ is a Sobolev norm, which characterizes the regularity of the final output function $h(\vx)=\fF^{-1}[{\phi}](\vx)$. The larger the exponent $\alpha$ is, the better the regularity becomes. 

For example, when $d=1$ and $\alpha=2$, by Parseval's theorem,
\begin{equation*}
    \norm{u}_{H^{1}}^2=\int_{\sR}(1+\abs{\xi}^2)\abs{\fF[u](\xi)}^{2}\diff{\xi}=\int_{\sR}u^2+\frac{1}{4\pi^2}\abs{\nabla u}^2\diff x.
\end{equation*}
Accordingly, the Fourier-domain variational problem reads as a standard variational problem in spatial domain. This is true for any quadratic Fourier-domain variational problem, but of course our Fourier-domain variational formulation is not necessarily being quadratic. The details for general cases (non-quadratic ones) are left to future work. For the quadratic setting with exponent $\alpha$, i.e., Problem~\eqref{mini-prob}, it is roughly equivalent to the following spatial-domain variational problem:
\begin{equation*}
    \min \int_{\sR^d}(u^2+\abs{\nabla ^{\frac{\alpha}{2}}u}^2)\diff x.
\end{equation*}
This is clear for integer $\alpha/2$, while fractional derivatives are required for non-integer $\alpha/2$. 

Back to our problem, after the above transformation, our goal is transformed into studying the following Fourier-domain variational problem,
%-------------problem 2------------------%
\begin{prob}\label{prob..VariationalPointCloud}
	Find a minimizer $\phi^*$ in $\fA_{\vX,\vY}$ such that
	\begin{equation}
	 \phi^*\in\arg\min_{\phi\in \fA_{\vX,\vY}} \norm{\fF^{-1}[\phi]}_{H^{\frac{\alpha}{2}}}^2.
	%& \mathrm{s.t.}\quad\fP_{\vX}\phi^*=\vY.
	\end{equation}
\end{prob}

\noindent This formulation is novel in the following two aspects:\\
1. We regard $\fF[h]$ as the primal solution.\\
2. The evaluation on the sample points $\vx_i$'s are imposed by the linear operator $\fP_{\vX}$.\\
Now we explain the importance of these new viewpoints for our task.

Traditionally, we all considered the problem in $\vx-y$ space, the spatial domain. Recently, by the understanding of F-Principle ~\citep{xu_training_2018,xu2019frequency,Luotao2020LFP,zhang_explicitizing_2019}, we believe that if DNN is used to fit the data, it is more natural to consider the problem in the frequency domain. In particular, the functions defined on Fourier domain are assumed to be primal. And our variational problem is asked for such functions.

We remark that the operator $\fP_{\vX}$ is the inverse Fourier transform with evaluations on sample points $\vX$. Actually, the linear operator $\fP_{\vX}$ projects a function defined on $\sR^d$ to a function defined on $0$-dimensional manifold $\vX$. Just like the (linear) trace operator $T$ in a Sobolev space projects a function defined on $d$-dimensional manifold into a function defined on $(d-1)$-dimensional boundary manifold. Note that the only function space over the $0$-dimensional manifold $\vX$ is the $n$-dimensional vector space $\sR^n$, where $n$ is the number of data points, while any Sobolev (or Besov) space over $d$-dimensional manifold ($d\geq 1$) is an infinite dimensional vector space.

%------------------------------------ section 4 -------------------------------------------
\section{Existence and Non-existence of Fourier-domain Variantional Problems}\label{section4}

In this section, we consider the existence/non-existence dichotomy to Problem~\ref{prob..VariationalPointCloud}. In subsection 4.1, we prove that there is no solution to the Problem~\ref{prob..VariationalPointCloud} in subcritical case $\alpha < d$. The supercritical case $\alpha>d$ will be investigated in subsection 4.2, where we prove the optimal function is a continuous and nontrivial solution. All proof of propositions and theorems in this section can be found in Appendix. 
%------------------- subsection 4.1 -------------------
\subsection{Subcritical Case: $\alpha<d$}  
In order to prove the nonexistence of the solution to the Problem~\ref{prob..VariationalPointCloud} in $\alpha<d$ case, at first we need to find a class of functions that make the norm tend to zero. 
Let $\psi_{\sigma}(\vxi)=(2\pi)^{\frac{d}{2}}\sigma^d \E^{-2\pi^2\sigma^2\norm{\vxi}^2}$ , then by direct calculation, we have $\fF^{-1}[\psi_{\sigma}](\vx)=\E^{-\frac{\norm{\vx}^2}{2\sigma^2}}$. For $\alpha<d$ the following proposition shows that the norm $\norm{\fF^{-1}[\psi_\sigma]}_{H^{\frac{\alpha}{2}}}^2$ can be sufficiently small as $\sigma\rightarrow 0$.

%--------------- proposition 1 -----------------------%
\begin{prop}[critical exponent]\label{prop..CriticalExponent}
	For any input dimension $d$, we have
	\begin{equation}
	\lim_{\sigma\to0}\norm{\fF^{-1}[\psi_\sigma]}_{H^{\frac{\alpha}{2}}}^2
	=\begin{cases}
	0,      & \alpha<d, \\
	C_d,      & \alpha=d, \\
	\infty, & \alpha>d.
	\end{cases}
	\end{equation}
	Here the constant $C_d=\frac{1}{2}(d-1)!(2\pi)^{-d}\frac{2\pi^{d/2}}{\Gamma\left(d/2\right)}$ only depends on the dimension $d$.
\end{prop}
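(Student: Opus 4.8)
The plan is to reduce the $d$-dimensional frequency integral defining the Sobolev norm to a single radial integral, extract the decisive power of $\sigma$, and then treat the three regimes separately.

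First I would unfold the definition of the norm and insert $\Abs{\psi_\sigma(\vxi)}^2=(2\pi)^d\sigma^{2d}\E^{-4\pi^2\sigma^2\norm{\vxi}^2}$, which gives
\begin{equation*}
\norm{\fF^{-1}[\psi_\sigma]}_{H^{\frac{\alpha}{2}}}^2=(2\pi)^d\sigma^{2d}\int_{\sR^d}\langle\vxi\rangle^\alpha\E^{-4\pi^2\sigma^2\norm{\vxi}^2}\diff\vxi.
\end{equation*}
Passing to spherical coordinates, with $\omega_{d-1}=\frac{2\pi^{d/2}}{\Gamma(d/2)}$ the surface area of the unit sphere $S^{d-1}$, and then rescaling by $t=\sigma r$, I would turn this into
\begin{equation*}
\norm{\fF^{-1}[\psi_\sigma]}_{H^{\frac{\alpha}{2}}}^2=(2\pi)^d\,\omega_{d-1}\,\sigma^{d-\alpha}\,J(\sigma),\qquad J(\sigma):=\int_0^\infty(\sigma^2+t^2)^{\frac{\alpha}{2}}\E^{-4\pi^2 t^2}t^{d-1}\diff t.
\end{equation*}
The entire dichotomy is now encoded in the prefactor $\sigma^{d-\alpha}$, because $J(\sigma)$ stays comparable to the finite positive constant $K_\alpha:=\int_0^\infty t^{\alpha+d-1}\E^{-4\pi^2 t^2}\diff t$ as $\sigma\to0$.

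Next I would control $J(\sigma)$ near $\sigma=0$. For $\sigma\le1$ the integrand is monotone increasing in $\sigma$ and dominated by $(1+t^2)^{\frac{\alpha}{2}}\E^{-4\pi^2 t^2}t^{d-1}\in L^1(0,\infty)$, so dominated (or monotone) convergence yields $J(\sigma)\to K_\alpha$; in particular $J$ is pinched between two positive constants in a neighborhood of $\sigma=0$. When $\alpha<d$ the factor $\sigma^{d-\alpha}\to0$ while $J$ stays bounded, forcing the limit $0$. When $\alpha>d$ the factor $\sigma^{d-\alpha}\to\infty$ while the elementary bound $(\sigma^2+t^2)^{\alpha/2}\ge t^\alpha$ gives $J(\sigma)\ge K_\alpha>0$, forcing the limit $\infty$. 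Only the borderline $\alpha=d$ survives with a finite nonzero value, namely $(2\pi)^d\,\omega_{d-1}\,K_d$.

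Finally, for $\alpha=d$ I would evaluate the constant explicitly. The substitution $s=4\pi^2 t^2$ converts $K_d=\int_0^\infty t^{2d-1}\E^{-4\pi^2 t^2}\diff t$ into $\frac{1}{2(2\pi)^{2d}}\int_0^\infty s^{d-1}\E^{-s}\diff s=\frac{(d-1)!}{2(2\pi)^{2d}}$, using $\Gamma(d)=(d-1)!$. Multiplying by $(2\pi)^d\,\omega_{d-1}$ collapses to $C_d=\frac{1}{2}(d-1)!(2\pi)^{-d}\frac{2\pi^{d/2}}{\Gamma(d/2)}$, as claimed. The conceptual heart is the scaling step that isolates $\sigma^{d-\alpha}$ and thereby pinpoints $d$ as the critical exponent; the only real technical care goes into justifying the passage to the limit inside $J(\sigma)$ and into the constant bookkeeping in the critical case.
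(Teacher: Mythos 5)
Your argument is correct and follows essentially the same route as the paper's proof: rescale by $\sigma$ to extract the decisive factor $\sigma^{d-\alpha}$, reduce to the radial integral $J(\sigma)$, pinch it between positive constants, and evaluate the Gaussian moment in the critical case via $\Gamma(d)=(d-1)!$. Your use of dominated convergence to get $J(\sigma)\to K_\alpha$ is in fact slightly cleaner in the borderline case $\alpha=d$ than the paper's Taylor expansion of $(\sigma^2+r^2)^{\alpha/2}$, whose correction term $\int_0^\infty r^{2d-3}\E^{-4\pi^2 r^2}\diff r$ is not even finite when $d=1$, so your justification is preferable there.
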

%proof

%---remark----%
\begin{rmk}
	The function $\fF^{-1}[\psi]$ can be any function in the Schwartz space, not necessarily Gaussian. Proposition~\ref{prop..CriticalExponent} still holds with (possibly) different $C_d$.
\end{rmk}
For every small $\sigma$, we can use $n$ rapidly decreasing functions $\fF^{-1}[\psi_{\sigma}](\vx-\vx_{i})$ to construct the solution $\fF^{-1}[\phi_{\sigma}](\vx)$ of the supervised learning problem. However, according to Proposition~\ref{prop..CriticalExponent}, when the parameter $\sigma$ tends to 0, the limit is the zero function in the sense of $L^2(\sR^d)$. Therefore we have the following theorem:
\begin{thm}[non-existence]\label{thm..alpha<d} % theorem 1
	Suppose that $\vY\neq\vzero$. For $\alpha<d$, there is no function $\phi^*\in \fA_{\vX,\vY}$ satisfying
	\begin{equation*}
	 \phi^*\in\arg\min_{\phi\in \fA_{\vX,\vY}}\norm{\fF^{-1}[\phi]}_{H^{\frac{\alpha}{2}}}^2.
	%& \mathrm{s.t.}\quad\fP_{\vX}\phi^*=\vY.
	\end{equation*}
	In other words, there is no solution to the Problem~\ref{prob..VariationalPointCloud}. 
\end{thm}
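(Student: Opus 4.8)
The plan is to prove non-existence by showing that the objective $\norm{\fF^{-1}[\phi]}_{H^{\frac{\alpha}{2}}}^2$, which is manifestly nonnegative on $\fA_{\vX,\vY}$, has \emph{infimum equal to $0$} over $\fA_{\vX,\vY}$, and then to argue that this infimum is never attained. The heart of the argument is a concentrating interpolation family built from the Gaussian bumps of Proposition~\ref{prop..CriticalExponent}. Writing $g_\sigma:=\fF^{-1}[\psi_\sigma]$, so that $g_\sigma(\vx)=\E^{-\norm{\vx}^2/(2\sigma^2)}$, I would look for an interpolant of the form $h_\sigma(\vx)=\sum_{i=1}^n c_i^\sigma\,g_\sigma(\vx-\vx_i)$. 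By the shift property of the Fourier transform its transform is $\phi_\sigma(\vxi)=\sum_{i=1}^n c_i^\sigma\,\E^{-2\pi\I\vxi\cdot\vx_i}\psi_\sigma(\vxi)$, a finite combination of modulated Gaussians and hence an element of $L^1(\sR^d)\cap L^2(\sR^d)$. The same computation that defines $\fP_{\vX}$ then yields $(\fP_{\vX}\phi_\sigma)_j=h_\sigma(\vx_j)=\sum_{i=1}^n c_i^\sigma\,g_\sigma(\vx_j-\vx_i)$, so $\phi_\sigma\in\fA_{\vX,\vY}$ precisely when the coefficients solve the linear system $G_\sigma\,(c_1^\sigma,\dots,c_n^\sigma)^\T=\vY$ with $(G_\sigma)_{ji}=\E^{-\norm{\vx_j-\vx_i}^2/(2\sigma^2)}$.

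The next step is to control these coefficients uniformly as $\sigma\to0$. Since the sample points $\vx_i$ are distinct, every off-diagonal entry of $G_\sigma$ decays like $\E^{-c/\sigma^2}$, so $G_\sigma\to I_n$; therefore for all sufficiently small $\sigma$ the matrix $G_\sigma$ is invertible with $\norm{G_\sigma^{-1}}$ bounded, whence $(c_1^\sigma,\dots,c_n^\sigma)^\T\to\vY$ and in particular $\sum_{i=1}^n\abs{c_i^\sigma}\le C$ uniformly in $\sigma$. This uniform boundedness is the \emph{main obstacle}: it is the only genuinely quantitative estimate, and it is exactly what prevents the interpolation coefficients from blowing up to compensate for the vanishing $H^{\frac{\alpha}{2}}$-mass of each individual bump. (Without distinctness of the $\vx_i$ the set $\fA_{\vX,\vY}$ may be empty, so that hypothesis is used here.)

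With the coefficients under control, bounding $\Abs{\sum_{i=1}^n c_i^\sigma\E^{-2\pi\I\vxi\cdot\vx_i}}^2\le\bigl(\sum_{i=1}^n\abs{c_i^\sigma}\bigr)^2\le C^2$ pointwise and pulling the constant out of the integral gives
\[
\norm{\fF^{-1}[\phi_\sigma]}_{H^{\frac{\alpha}{2}}}^2\le C^2\int_{\sR^d}\langle\vxi\rangle^\alpha\psi_\sigma(\vxi)^2\diff{\vxi}=C^2\norm{g_\sigma}_{H^{\frac{\alpha}{2}}}^2,
\]
which tends to $0$ as $\sigma\to0$ by Proposition~\ref{prop..CriticalExponent} in the subcritical regime $\alpha<d$. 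Since each $\phi_\sigma$ lies in $\fA_{\vX,\vY}$, this forces $\inf_{\phi\in\fA_{\vX,\vY}}\norm{\fF^{-1}[\phi]}_{H^{\frac{\alpha}{2}}}^2=0$.

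Finally I would rule out attainment. If some $\phi^*\in\fA_{\vX,\vY}$ were a minimizer, it would satisfy $\int_{\sR^d}\langle\vxi\rangle^\alpha\Abs{\phi^*(\vxi)}^2\diff{\vxi}=0$; since $\langle\vxi\rangle^\alpha\ge1$ everywhere, this forces $\phi^*=0$ almost everywhere, hence $\fP_{\vX}\phi^*=\vzero$. But admissibility requires $\fP_{\vX}\phi^*=\vY\neq\vzero$, a contradiction. Therefore the infimum $0$ is not achieved by any admissible function, and Problem~\ref{prob..VariationalPointCloud} has no solution when $\alpha<d$.
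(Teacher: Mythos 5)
Your proposal is correct and follows essentially the same route as the paper: build Gaussian-bump interpolants $\phi_\sigma=\sum_i c_i^\sigma \E^{-2\pi\I\vxi\cdot\vx_i}\psi_\sigma$ with coefficients solving the Gram system, show the $H^{\alpha/2}$-norm tends to $0$ via Proposition~\ref{prop..CriticalExponent}, and conclude that attainment would force $\phi^*=0$, contradicting $\vY\neq\vzero$. If anything, your explicit uniform bound $\sum_i\abs{c_i^\sigma}\le C$ (via $G_\sigma\to I_n$) makes precise a step the paper's proof leaves implicit when it passes from the single-bump estimate to the norm of the full linear combination.
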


%-------------------------subsection 4.2 supercritical case-----------------------------%

\subsection{Supercritical Case: $\alpha>d$}\label{Supercritical case}
In this section, we provide a theorem to establish the existence of the minimizer for Problem \ref{prob..VariationalPointCloud} in the case of $\alpha>d$. 

\begin{thm}[existence]\label{thm..alpha>d} % theorem 2
	For $\alpha>d$, there exists $\phi^*\in \fA_{\vX,\vY}$ satisfying
	\begin{equation*}
	 \phi^*\in\arg\min_{\phi\in \fA_{\vX,\vY}}\norm{\fF^{-1}[\phi]}_{H^{\frac{\alpha}{2}}}^2.
	%& \mathrm{s.t.}\quad\fP_{\vX}\phi^*=\vY.
	\end{equation*}
	In other words, there exists a solution to the Problem \ref{prob..VariationalPointCloud}.
% 	Moreover, $\fF^{-1}[{\phi}^*]\in C^{j}(\sR^d)$, where the nonnegative integer $j=[\frac{\alpha-d}{2}]$. 
\end{thm}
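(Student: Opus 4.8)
The plan is to recognize Problem~\ref{prob..VariationalPointCloud} as a minimum-norm interpolation problem in a Hilbert space and to settle it by the Hilbert projection theorem (equivalently, the direct method). The one non-routine ingredient is the continuity of the point-evaluation functionals, and this is exactly where the hypothesis $\alpha>d$ is consumed; everything else is standard once that is in place.

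First I would fix the Hilbert space. Writing $h=\fF^{-1}[\phi]$, the objective equals $\norm{h}_{H^{\frac{\alpha}{2}}}^2$, so it is natural to work in the weighted Fourier-domain space of functions $\phi$ with $\int_{\sR^d}\langle\vxi\rangle^\alpha\Abs{\phi(\vxi)}^2\diff\vxi<\infty$, endowed with the inner product $\langle\phi,\psi\rangle:=\int_{\sR^d}\langle\vxi\rangle^\alpha\phi(\vxi)\overline{\psi(\vxi)}\diff\vxi$; via $\fF^{-1}$ this is isometric to $H^{\frac{\alpha}{2}}(\sR^d)$. The crux is to show that for $\alpha>d$ each constraint $L_j\phi:=\int_{\sR^d}\phi(\vxi)\E^{2\pi\I\vxi\cdot\vx_j}\diff\vxi=h(\vx_j)$ is a bounded linear functional. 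By Cauchy--Schwarz,
\[
\Abs{L_j\phi}\le\int_{\sR^d}\abs{\phi(\vxi)}\diff\vxi\le\left(\int_{\sR^d}\langle\vxi\rangle^{\alpha}\abs{\phi(\vxi)}^2\diff\vxi\right)^{\frac12}\left(\int_{\sR^d}\langle\vxi\rangle^{-\alpha}\diff\vxi\right)^{\frac12},
\]
and the tail integral $\int_{\sR^d}\langle\vxi\rangle^{-\alpha}\diff\vxi$ converges precisely when $\alpha>d$. Hence $\Abs{L_j\phi}\le C\norm{h}_{H^{\frac{\alpha}{2}}}$, which is just the Sobolev embedding $H^{\frac{\alpha}{2}}\hookrightarrow C_0(\sR^d)$ for $\frac{\alpha}{2}>\frac{d}{2}$. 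In particular the relevant $\phi$ lie in $L^1\cap L^2$, so $\fF^{-1}[\phi]$ is genuinely continuous, the evaluations are honest point values, and $\fP_{\vX}$ is well-defined and bounded on the whole space. By the Riesz representation theorem each $L_j$ has a representer $k_j(\vxi)=\langle\vxi\rangle^{-\alpha}\E^{-2\pi\I\vxi\cdot\vx_j}$, which lies in the space exactly because $\alpha>d$, so we are in a reproducing kernel Hilbert space.

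Next I would check that the feasible set is a nonempty closed affine subspace. Boundedness of $\fP_{\vX}$ makes $N:=\ker\fP_{\vX}$ closed, and the feasible set is the translate $\phi_0+N$ once one feasible $\phi_0$ is exhibited. Surjectivity of $\fP_{\vX}$ onto $\sR^n$ (hence nonemptiness for every $\vY$) follows from linear independence of $\{k_j\}$: a relation $\sum_jc_jk_j=0$ forces $\sum_jc_j\E^{-2\pi\I\vxi\cdot\vx_j}=0$ almost everywhere after dividing by the strictly positive weight $\langle\vxi\rangle^{-\alpha}$, which is impossible for distinct sample points since the characters $\vxi\mapsto\E^{-2\pi\I\vxi\cdot\vx_j}$ are linearly independent. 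Consequently the Gram matrix $G_{ij}=\langle k_j,k_i\rangle=\int_{\sR^d}\langle\vxi\rangle^{-\alpha}\E^{2\pi\I\vxi\cdot(\vx_i-\vx_j)}\diff\vxi$ is invertible.

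Finally I would extract the minimizer. One may either invoke the projection theorem directly---a minimizing sequence is bounded, hence has a weakly convergent subsequence whose limit stays in the (closed, convex, therefore weakly closed) set $\phi_0+N$ and attains the infimum by weak lower semicontinuity of the norm---or write the solution explicitly as the orthogonal projection of $0$ onto $\mathrm{span}\{k_j\}=N^\perp$, namely $\phi^*=\sum_{j=1}^n\beta_jk_j$ with $\beta=G^{-1}\vY$. Either way $\phi^*\in\fA_{\vX,\vY}$ realizes the minimum (any $\phi\in\fA_{\vX,\vY}$ outside the space has infinite objective and cannot compete), establishing existence. I expect the entire difficulty to sit in the second step: the constraints define continuous functionals only for $\alpha>d$, which is why the argument breaks down at and below the critical exponent $\alpha=d$, consistent with Proposition~\ref{prop..CriticalExponent} and the nonexistence Theorem~\ref{thm..alpha<d}.
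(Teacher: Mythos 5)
Your proof is correct, and it reaches the conclusion by a genuinely different route than the paper. The shared ingredient is the key estimate $\abs{L_j\phi}\le\norm{\fF^{-1}[\phi]}_{H^{\frac{\alpha}{2}}}\left(\int_{\sR^d}\langle\vxi\rangle^{-\alpha}\diff\vxi\right)^{1/2}$, whose finiteness is exactly the condition $\alpha>d$; the paper uses this same Cauchy--Schwarz bound, but only at the end (steps 5--6 of its proof) to check that the weak limit of a minimizing sequence lands back in $\fA_{\vX,\vY}$. You instead put it up front to conclude that the evaluation functionals are bounded, hence that the feasible set (intersected with the weighted-$L^2$ Hilbert space, outside of which the objective is infinite) is a nonempty closed affine subspace, and then invoke the Hilbert projection theorem. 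This reorganization buys you more than the paper proves: uniqueness of the minimizer and the explicit representer formula $\phi^*=\sum_j\beta_j k_j$ with $k_j(\vxi)=\langle\vxi\rangle^{-\alpha}\E^{-2\pi\I\vxi\cdot\vx_j}$ and $\beta=G^{-1}\vY$, which also explains the ridge-regression computations in Section~\ref{section5}. It also makes the paper's detour through the closure $\overline{\fA_{\vX,\vY}}$ and the auxiliary problem unnecessary, since continuity of $\fP_{\vX}$ already gives closedness of the constraint set. Your nonemptiness argument (invertibility of the Gram matrix via linear independence of the characters $\vxi\mapsto\E^{-2\pi\I\vxi\cdot\vx_j}$ for distinct sample points) differs from the paper's explicit Gaussian construction but is equally valid. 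The one thing the paper's direct-method argument retains as an advantage is robustness: it does not rely on the quadratic structure of the objective, which matters for the non-quadratic generalizations the authors defer to future work; for the quadratic problem at hand, your argument is the sharper one.
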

\begin{rmk}
    Note that, according to the Sobolev embedding theorem~\citep{adams2003sobolev,1999Partial}, the minimizer in Theorem \ref{thm..alpha>d} has smoothness index no less than $[\frac{\alpha-d}{2}]$.
\end{rmk}

%---------------------------- 5 Band-limited Regime-------------------------------%

\section{Numerical Results}\label{section5}
In this section, we illustrate our results by solving Fourier-domain variational problems numerically. We use uniform mesh in frequency domain with mesh size $\Delta\xi$ and band limit $M\Delta\xi$. In this discrete setting, the considered space becomes $\sR^{(2M)^d}$. We emphasize that the numerical solution with this setup always exists even for the subcritical case which corresponds to the non-existence theorem. However, as we will show later, the numerical solution is trivial in nature when $\alpha<d$.

% It is known that a function with very high frequency components usually leads to a bad generalization performance. As a result, a more realistic solution to a supervised learning problem usually has a limited band of frequency. This section is devoted to the discretization of our Fourier-domain variational problem for such band-limited target functions. Here we use uniform mesh in frequency domain with mesh size $\Delta\xi$ and band limit $M\Delta\xi$. In this discrete setting, the considered space becomes $\sR^{(2M)^d}$.
\subsection{Special Case: One Data Point in One Dimension}\label{case1}
To simplify the problem, we start with a single point $X=0\in\sZ$ with the label $Y=2$. Denote $\phi_j = \phi(\xi_j)$ for $j\in\sZ$. We also assume that the function $\phi$ is an even function. Then according to the definition of $\fP_{\vX}$, we have the following problem:
\begin{exam}[Problem~\ref{prob..VariationalPointCloud} with a particular discretization]\label{prob..BandLimit_discretization}  % problem 4 
	\begin{align}
	& \min_{\phi\in\sR^M} \sum_{j=1}^M(1+{j}^2\Delta\xi^2)^{\frac{\alpha}{2}}\Abs{\phi_j}^{2}, \\
	& \mathrm{s.t.}\quad \sum_{j=1}^M\phi_j\Delta\xi = 1,
	\end{align}
\end{exam}
\noindent where we further assume $\phi_0 = \phi(0) = 0$.  If we denote $\vphi = {(\phi_1, \phi_2, \ldots, \phi_M)}^{\T}$, $b = \frac{1}{\Delta\xi}$, $\mA = (1, 1, \ldots, 1)\in\sR^M$ and
\begin{equation*}
\mGamma = \sqrt{\lambda}
\begin{pmatrix}
(1+1^2\Delta\xi^2)^{\frac{\alpha}{4}} & & & \\
& (1+2^2\Delta\xi^2)^{\frac{\alpha}{4}} & & \\
& & \ddots & \\
& & & (1+M^2\Delta\xi^2)^{\frac{\alpha}{4}}
\end{pmatrix}.
\end{equation*}
In fact this is a standard Tikhonov regularization~\citep{1977Solutions} also known as ridge regression problem with the Lagrange multiplier $\lambda$. The corresponding ridge regression problem is,
\begin{equation}
\min_{\vphi}{\norm{\mA\vphi - b}_2^2 + \norm{\mGamma\vphi}_2^2},
\end{equation}
where we put $\lambda$ in the optimization term $\norm{\mGamma\vphi}_2^2$, instead of the constraint term $\norm{\mA\vphi - b}_2^2$. This problem admits an explicit and unique solution~\citep{1977Solutions},
\begin{equation}\label{ridge_solution}
\vphi = {(\mA^{\T}\mA + \mGamma^{\T}\mGamma)}^{-1}\mA^{\T} b.
\end{equation}
Here we need to point out that the above method is also applicable to the case that the matrix $\mGamma$ is not diagonal.

Back to our problem, in order to obtain the explicit expression for the optimal $\vphi$ we need the following relation between the solution of the ridge regression and the singular-value decomposition (SVD).

By denoting $\tilde{\mGamma} = \mI$ and
\begin{equation*}
\tilde{\mA} = \mA\mGamma^{-1}
= \frac{1}{\sqrt{\lambda}}\left( (1+1^2\Delta\xi^2)^{\frac{\alpha}{4}}, (1+2^2\Delta\xi^2)^{\frac{\alpha}{4}}, \ldots, (1+M^2\Delta\xi^2)^{\frac{\alpha}{4}} \right),
\end{equation*}
where $\mI$ is the diagonal matrix, the optimal solution~\eqref{ridge_solution} can be written as
\begin{equation*}
\vphi = {(\mGamma^{\T})}^{-1}{\left( \tilde{\mA}^{\T}\tilde{\mA} +\mI \right)}^{-1}\mGamma^{-1}\mA^{\T} b
= {(\mGamma^{\T})}^{-1}{\left( \tilde{\mA}^{\T}\tilde{\mA} +\mI \right)}^{-1}\tilde{\mA}^{\T} b
= {(\mGamma^{\T})}^{-1}\tilde{\vphi},
\end{equation*}
where $\tilde{\vphi}={\left( \tilde{\mA}^{\T}\tilde{\mA} +\mI \right)}^{-1}\tilde{\mA}^{\T} b$ is the solution of ridge regression with $\tilde{\mA}$ and $\tilde{\mGamma}$. In order to obtain the explicit expression for $\tilde{\vphi}$ we need the following relation between the solution of the ridge regression and the singular-value decomposition (SVD).

\begin{lem}\label{ridge_svd}  % Lemma: ridge_SVD
	If $\tilde\mGamma = \mI$, then this least-squares solution can be solved using SVD. Given the singular value decomposition
	\begin{equation*}
	\tilde\mA = \mU\mSigma\mV^{\T},
	\end{equation*}
	with singular values $\sigma_i$, the Tikhonov regularized solution can be expressed aspects
	\begin{equation*}
	\tilde\vphi = \mV\mD\mU^{\T} b,
	\end{equation*}
	where $\mD$ has diagonal values
	\begin{equation*}
	D_{ii} = \frac{\sigma_i}{\sigma_i^2 + 1},
	\end{equation*}
	and is zero elsewhere.
\end{lem}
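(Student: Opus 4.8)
The plan is to reduce the constrained problem to its normal equations and then diagonalize everything through the SVD. First I would recall that with $\tilde\mGamma = \mI$ the ridge objective becomes $\norm{\tilde\mA\vphi - b}_2^2 + \norm{\vphi}_2^2$, a smooth strictly convex function of $\vphi$; setting its gradient to zero yields the normal equations $(\tilde\mA^\T\tilde\mA + \mI)\tilde\vphi = \tilde\mA^\T b$. Since $\tilde\mA^\T\tilde\mA + \mI$ is symmetric positive definite (its eigenvalues are at least $1$), it is invertible, so $\tilde\vphi = (\tilde\mA^\T\tilde\mA + \mI)^{-1}\tilde\mA^\T b$ is the unique minimizer, matching the formula already displayed in the text.

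Next I would substitute the SVD $\tilde\mA = \mU\mSigma\mV^\T$. Using the orthogonality relations $\mU^\T\mU = \mI$ and $\mV^\T\mV = \mV\mV^\T = \mI$ of the singular vectors, I would compute $\tilde\mA^\T\tilde\mA = \mV\mSigma^\T\mSigma\mV^\T$ and hence $\tilde\mA^\T\tilde\mA + \mI = \mV(\mSigma^\T\mSigma + \mI)\mV^\T$, where $\mSigma^\T\mSigma$ is diagonal with entries $\sigma_i^2$. Inverting this factorization gives $(\tilde\mA^\T\tilde\mA + \mI)^{-1} = \mV(\mSigma^\T\mSigma + \mI)^{-1}\mV^\T$, with the middle factor diagonal with entries $(\sigma_i^2 + 1)^{-1}$.

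Finally I would assemble the pieces. Writing $\tilde\mA^\T b = \mV\mSigma^\T\mU^\T b$ and multiplying, the two inner copies of $\mV^\T\mV$ collapse to the identity, leaving $\tilde\vphi = \mV(\mSigma^\T\mSigma + \mI)^{-1}\mSigma^\T\mU^\T b$. Setting $\mD := (\mSigma^\T\mSigma + \mI)^{-1}\mSigma^\T$, a product of two (rectangular) diagonal matrices, its nonzero entries are exactly $D_{ii} = \sigma_i/(\sigma_i^2 + 1)$ while all off-diagonal entries vanish, which yields $\tilde\vphi = \mV\mD\mU^\T b$ as claimed.

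I do not anticipate a serious obstacle here, since this is the classical filter-factor identity for Tikhonov regularization; the only point requiring care is the dimensional bookkeeping of the rectangular matrix $\mSigma$ (and the distinction between $\mSigma^\T\mSigma$ and $\mSigma\mSigma^\T$), so that the identity matrices appearing in $\mSigma^\T\mSigma + \mI$ and in the normal equations have the correct size and the product $(\mSigma^\T\mSigma + \mI)^{-1}\mSigma^\T$ is genuinely diagonal with the stated entries.
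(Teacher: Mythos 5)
Your proposal is correct and follows essentially the same route as the paper: substitute the SVD into the closed-form Tikhonov solution $(\tilde\mA^{\T}\tilde\mA + \mI)^{-1}\tilde\mA^{\T}b$, collapse the inner $\mV^{\T}\mV$ factors, and read off $\mD = (\mSigma^{\T}\mSigma+\mI)^{-1}\mSigma^{\T}$ with entries $\sigma_i/(\sigma_i^2+1)$. The only difference is that you additionally derive the normal equations and justify invertibility, which the paper takes for granted from the cited reference; this is a harmless (indeed welcome) elaboration, not a different argument.
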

\begin{proof} % proof of SVD Lemma
	In fact, $\tilde\vphi={(\tilde\mA^{\T}\tilde\mA + \tilde\mGamma^{\T}\tilde\mGamma)}^{-1}\tilde\mA^{\T} b
	=\mV(\mSigma^\T\mSigma+1\vI)^{-1}\mV^\T\mV\mSigma^\T\mU^\T b\\
	=\mV\mD\mU^{\T} b$, which completes the proof.
\end{proof}

\noindent Since $\tilde{\mA}\tilde{\mA}^{\T} = \dfrac{1}{\lambda}\sum_{j = 1}^M (1+j^2\Delta\xi^2)^{-\frac{\alpha}{2}}$, we have $\tilde{\mA} = U \Sigma \vV^{\T}$ with
\begin{equation*}
U = 1, \quad \Sigma = \frac{1}{\sqrt{\lambda}}{\left( \sum_{j = 1}^M (1+j^2\Delta\xi^2)^{-\frac{\alpha}{2}} \right)}^{\frac{1}{2}} := {Z}/{\sqrt{\lambda}},
\end{equation*}
\begin{equation*}
\vV = {\left( (1+1^2\Delta\xi^2)^{-\frac{\alpha}{2}}/Z, (1+2^2\Delta\xi^2)^{-\frac{\alpha}{2}}/Z, \ldots, (1+M^2\Delta\xi^2)^{-\frac{\alpha}{2}}/Z \right)}^{\T}.
\end{equation*}
Then we get the diagonal value
\begin{equation*}
D = \frac{{Z}/{\sqrt{\lambda}}}{{Z^2}/{\lambda} + 1}.
\end{equation*}
Therefore, by Lemma ~\ref{ridge_svd}
\begin{equation*}
\tilde{\vphi} = \vV D U b = \dfrac{{1}/{\sqrt{\lambda}}}{{Z^2}/{\lambda} + 1}{\left( (1+1^2\Delta\xi^2)^{-\frac{\alpha}{2}},(1+2^2\Delta\xi^2)^{-\frac{\alpha}{2}}, \ldots, (1+M^2\Delta\xi^2)^{-\frac{\alpha}{2}} \right)}^{\T} b.
\end{equation*}
Finally, for the original optimal solution
\begin{align*}
\vphi   = {(\mGamma^{\T})}^{-1}\tilde{\vphi} =  \frac{1}{(Z^2 + \lambda)\Delta\xi}{\left( (1+1^2\Delta\xi^2)^{-\frac{\alpha}{2}}, (1+2^2\Delta\xi^2)^{-\frac{\alpha}{2}}, \ldots, (1+M^2\Delta\xi^2)^{-\frac{\alpha}{2}} \right)}^{\T},   
\end{align*}
which means
\begin{equation*}
\phi_j = \frac{(1+j^2\Delta\xi^2)^{-\frac{\alpha}{2}}}{(Z^2 + \lambda)\Delta\xi}.
\end{equation*}
To derive the function in $x$ space, say $h(x)$ then
\begin{align}\label{h(x)}
h(x) & = \frac{1}{(Z^2 + \lambda)} \sum_{j=-M}^{M} (1+j^2\Delta\xi^2)^{-\frac{\alpha}{2}} \E^{2\pi\I j x} \nonumber\\
& = \frac{2}{(Z^2 + \lambda)} \sum_{j=1}^{M} (1+j^2\Delta\xi^2)^{-\frac{\alpha}{2}} \cos(2\pi j x).
\end{align}

Fig.~\ref{fig:hx_is_nontrivial}  shows that for this special case with a large $M$, $h(x)$ is not an trivial function in $\alpha>d$ case and degenerates to a trivial function in $\alpha<d$ case.

\begin{figure} % plot the h(x)
	\centering
    \includegraphics[width=0.5\textwidth]{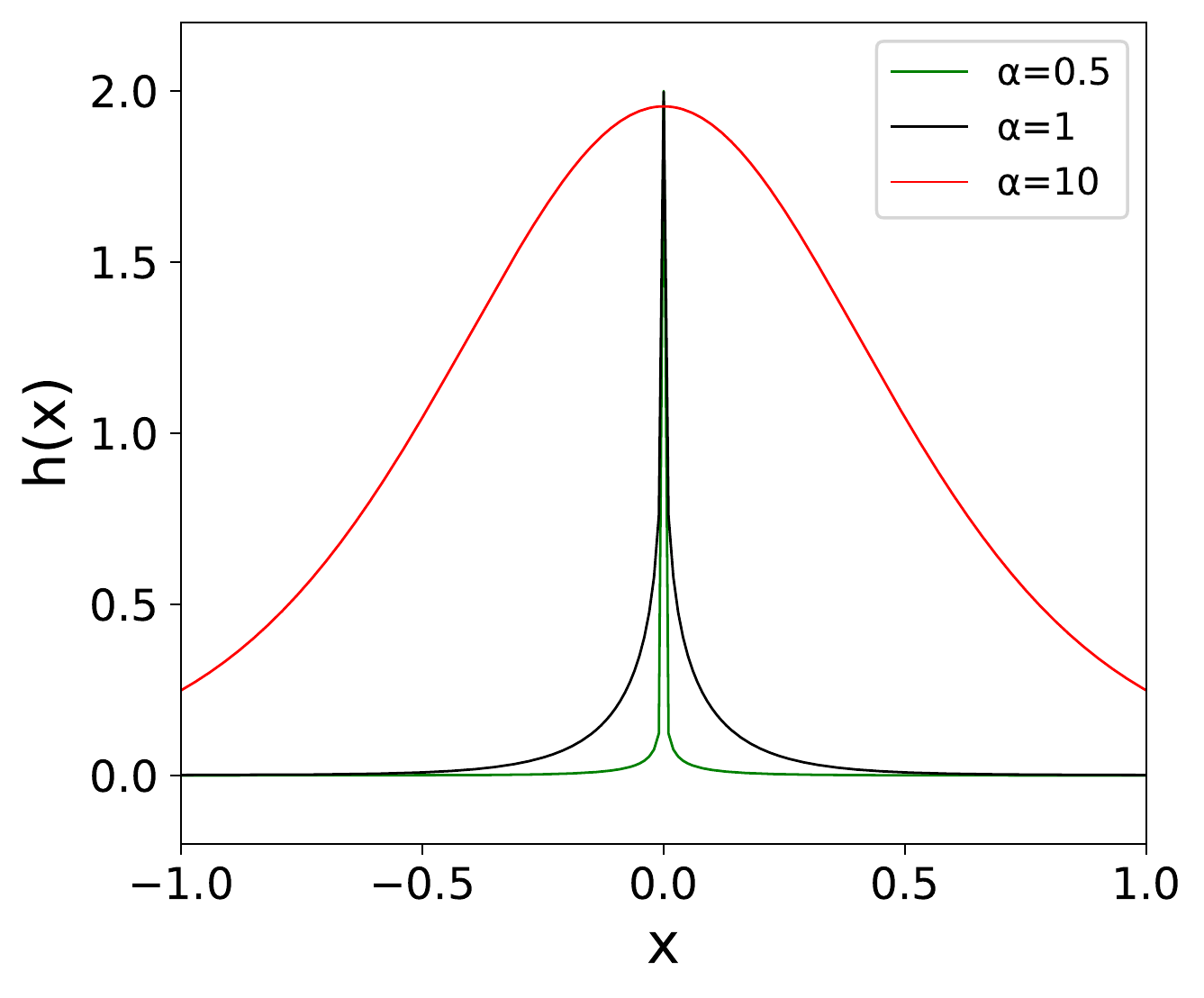}
	\caption{Fitting the function $h(x)$ shown in equation~\eqref{h(x)} with different exponent $\alpha$'s. Here we take $M=10^6$, $\Delta\xi=0.01$, $\lambda=1$ and different $\alpha$ and observe that $h(x)$ is not an trivial function in $\alpha>d$ case and degenerates to a trivial function in $\alpha<d$ case.}
	\label{fig:hx_is_nontrivial}
\end{figure}

%-------------------- subsection 5.2 n point in d dimension ------------------------%

\subsection{General Case: $n$ Points in $d$ Dimension}\label{case2}
Assume that we have $n$ data points $\vx_1,\vx_2,\ldots,\vx_n\in \sR^d$ and each data point has $d$ components:
\begin{equation*}
\vx_i=\left(x_{i1},x_{i2},\ldots,x_{id}\right)^\T
\end{equation*}
and denote the corresponding label as $\left(y_1,y_2,\ldots,y_n\right)^\T$. For the sake of simplicity, we denote the vector $(j_1,j_2,\cdots,j_d)^\T$ by $\vJ_{j_1\ldots j_d}$. Then our problem becomes
\begin{exam}[Problem \ref{prob..VariationalPointCloud} with general discretization]
	\begin{align}
	& \min_{\phi\in \sR^{(2M)^d}} \sum_{j_1,\ldots,j_d=-M}^M(1+\norm{\vJ_{j_1\ldots j_d}}^2\Delta\xi^2)^{\frac{\alpha}{2}}\Abs{\phi_{j_1\ldots j_d}}^{2}, \\
	& \mathrm{s.t.}\quad \sum_{j_1,\ldots,j_d=-M}^M\phi_{j_1\ldots j_d}\E^{2\pi\I\Delta\xi\vJ_{j_1\ldots j_d}^\T\vx_k}=y_k, \ \ k=1,2,\ldots,d
	\end{align}
\end{exam}

\noindent The calculation of this example can be completed by the method analogous to the one used in subsection~\ref{case1}. Let 
\begin{equation}\label{Aj}
\mA_j=\left(\E^{2\pi\I\Delta\xi\vJ_{-M-M\ldots -M}^\T\vx_j},\ldots, \E^{2\pi\I\Delta\xi\vJ_{j_1j_2\ldots j_d}^\T\vx_j},\ldots,\E^{2\pi\I\Delta\xi\vJ_{MM\ldots M}^\T\vx_j}\right)^\T,\ j=1,2,\ldots,n,
\end{equation}

\begin{equation}\label{A}
\mA=\left(\mA_1,\mA_2,\ldots,\mA_n \right)^\T\in \sR^{n\times (2M)^d},\quad
\vb=\left(y_1,y_2,\ldots,y_n\right)^\T\in \sR^{n\times 1},
\end{equation}
\begin{equation}\label{Gamma}
\mGamma=\lambda
\begin{pmatrix}
\ddots & &  \\
& (1+\norm{\vJ_{j_1j_2\ldots j_d}}^2\Delta\xi^2)^{\frac{\alpha}{4}}  & \\
& & \ddots 
\end{pmatrix}\in \sR^{(2M)^d\times(2M)^d}.
\end{equation}
We just need to solve the following equation:
\begin{equation}\label{phi}
\vphi = {(\mA^{\T}\mA + \mGamma^{\T}\mGamma)}^{-1}\mA^{\T} b.
\end{equation}
Then we can get the output function $h(x)$ by using inverse Fourier transform:
\begin{equation}\label{h}
h(\vx) = \sum_{j_1,\ldots,j_d=-M}^M\phi_{j_1\ldots j_d}\E^{2\pi\I\Delta\xi\vJ_{j_1\ldots j_d}\cdot\vx}
\end{equation}
Since the size of the matrix is too large, it is difficult to solve $\vphi$ by an explicit calculation. Thus we choose special $n$, $d$ and $M$ and show that $h(x)$ is not a trivial solution (non-zero function).

In our experiment, we set the hyper-parameter $M,\alpha,\lambda,\Delta\xi$ in advance. We set $\lambda=0,5,\Delta\xi=0.1$ in 1-dimensional case and $\lambda=0.2,\Delta\xi=0.1$ in 2-dimensional case. We select two data points $\{(-0.5,0.9),(0.5,0.9)\}$ as the given points in 1-dimensional case and four points as given points in 2-dimensional case whose second coordinates are 0.5 so that it is convenient to observe the phenomenon. At first, we use formula \eqref{Aj}, \eqref{A} and \eqref{Gamma} to calculate matrix $\mA, \mGamma$ and vector $\vb$. Then from the equation \eqref{phi} we can deduce vector $\vphi$. The final output function $h(\vx)$ is obtained by inverse discrete Fourier transform \eqref{h}.

 In Fig.\ref{fig:diff_M}, we set $\alpha=10$ in both cases to ensure $\alpha>d$ and change the band limit $M$. We observe that as $M$ increases, the fitting curve converges to a non-trivial curve. In Fig.\ref{fig:diff_alpha}, we set $M=1000$ in 1-dimensional case and $M=100$ in 2-dimensional case. By changing exponent $\alpha$, we can see in all cases, the fitting curves are non-trivial when $\alpha>d$, but degenerate when $\alpha<d$. 

%-----------graph-----------------
%
%
\begin{figure}
    % \centering
	\subfigure[2 points in 1 dimension]{
	    \includegraphics[width=0.5\textwidth]{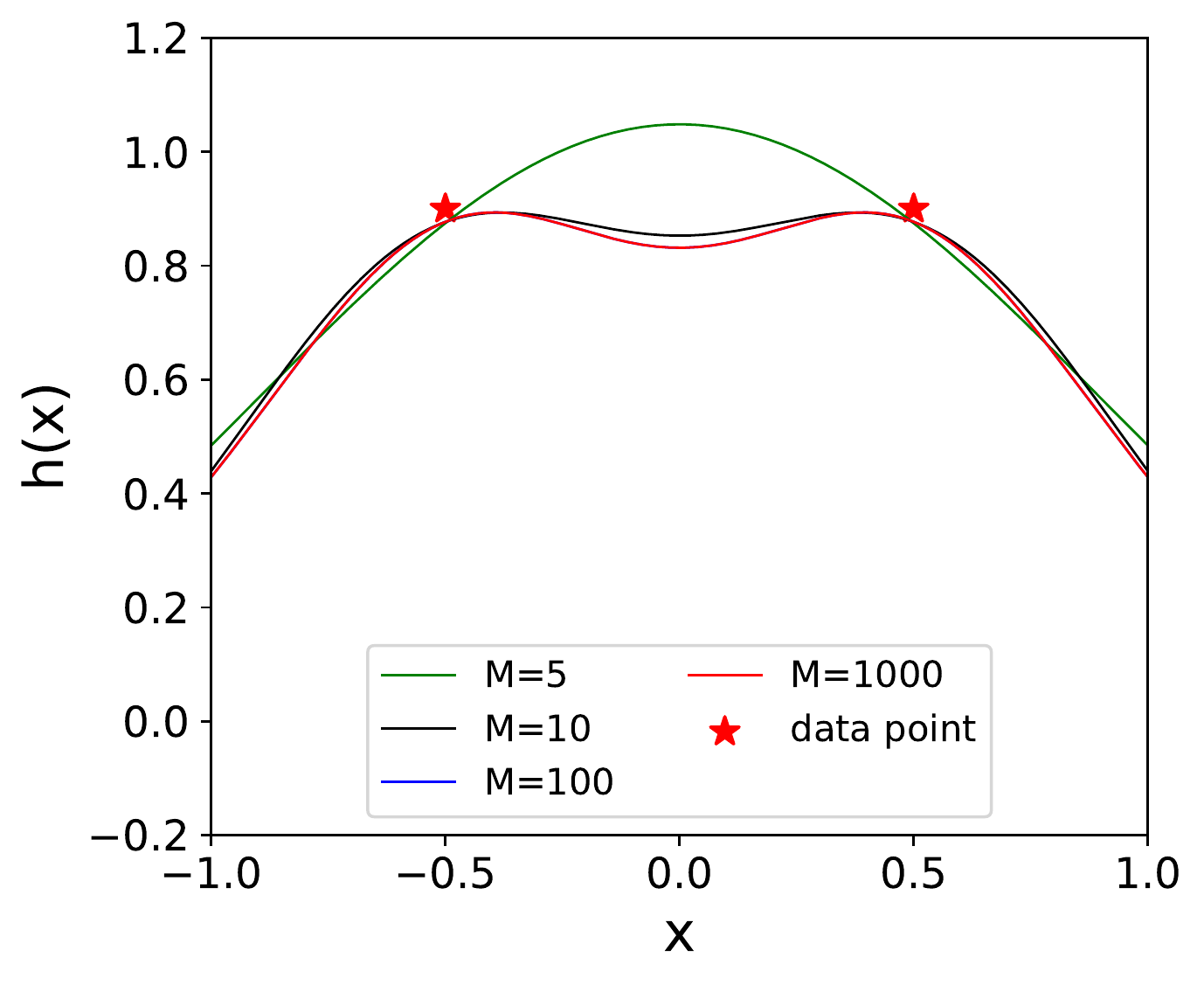}}
    \subfigure[20 points in 2 dimension]
        {\includegraphics[width=0.5\textwidth]{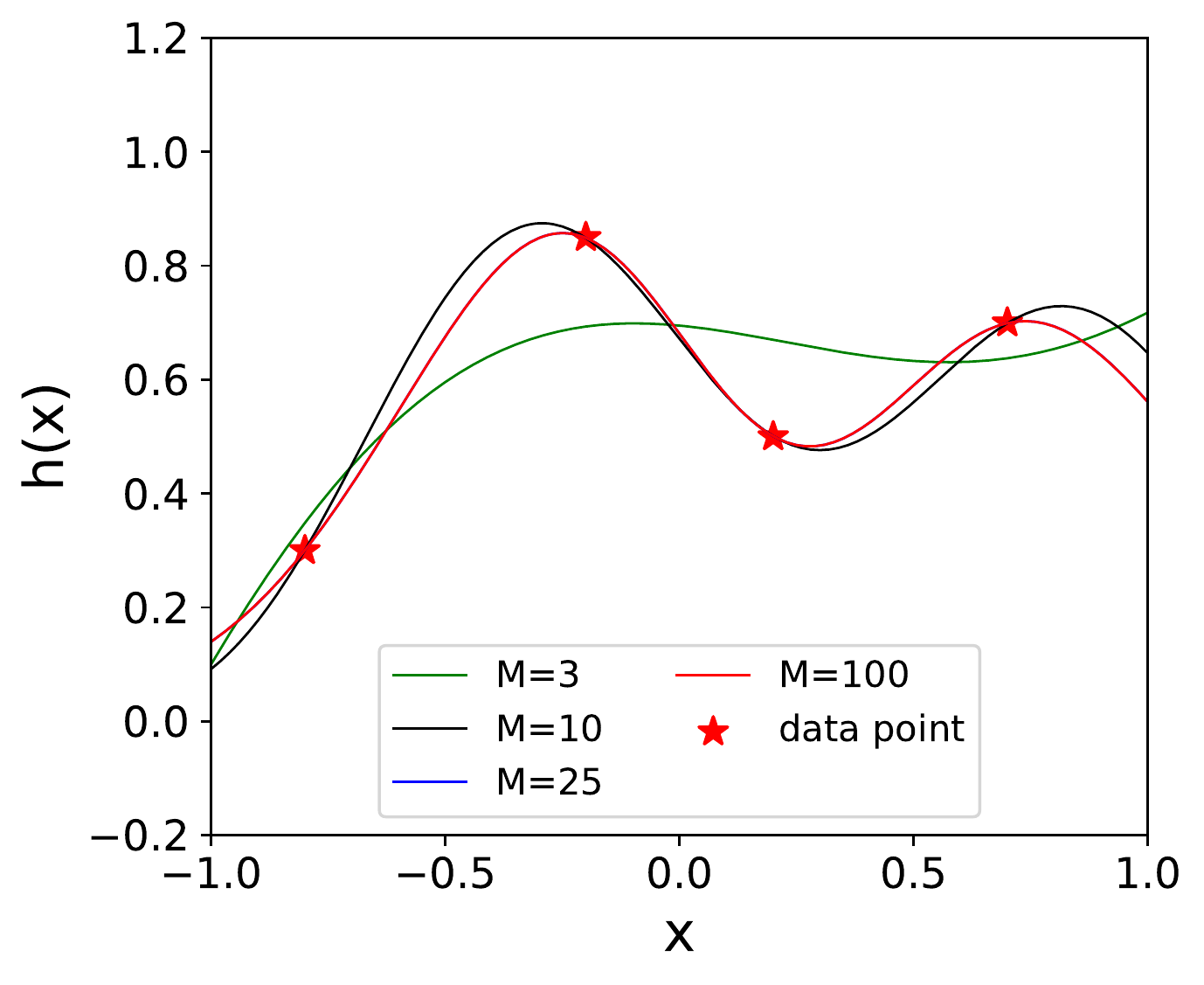}}
	\caption{Fitting data points in different dimensions with different band limit $M$. We use a proper $\alpha$ ($\alpha>d$) and observe that even for a large $M$, the function $h(x)$ does not degenerate to a trivial function. Note that the blue curve and the red one overlap with each. Here the trivial function represents a function whose value decays rapidly to zero except for the given training points.}
	\label{fig:diff_M}
\end{figure}

\begin{figure}
% 	\centering
	\subfigure[2 points in 1 dimension]
	{
		\includegraphics[width=0.5\textwidth]{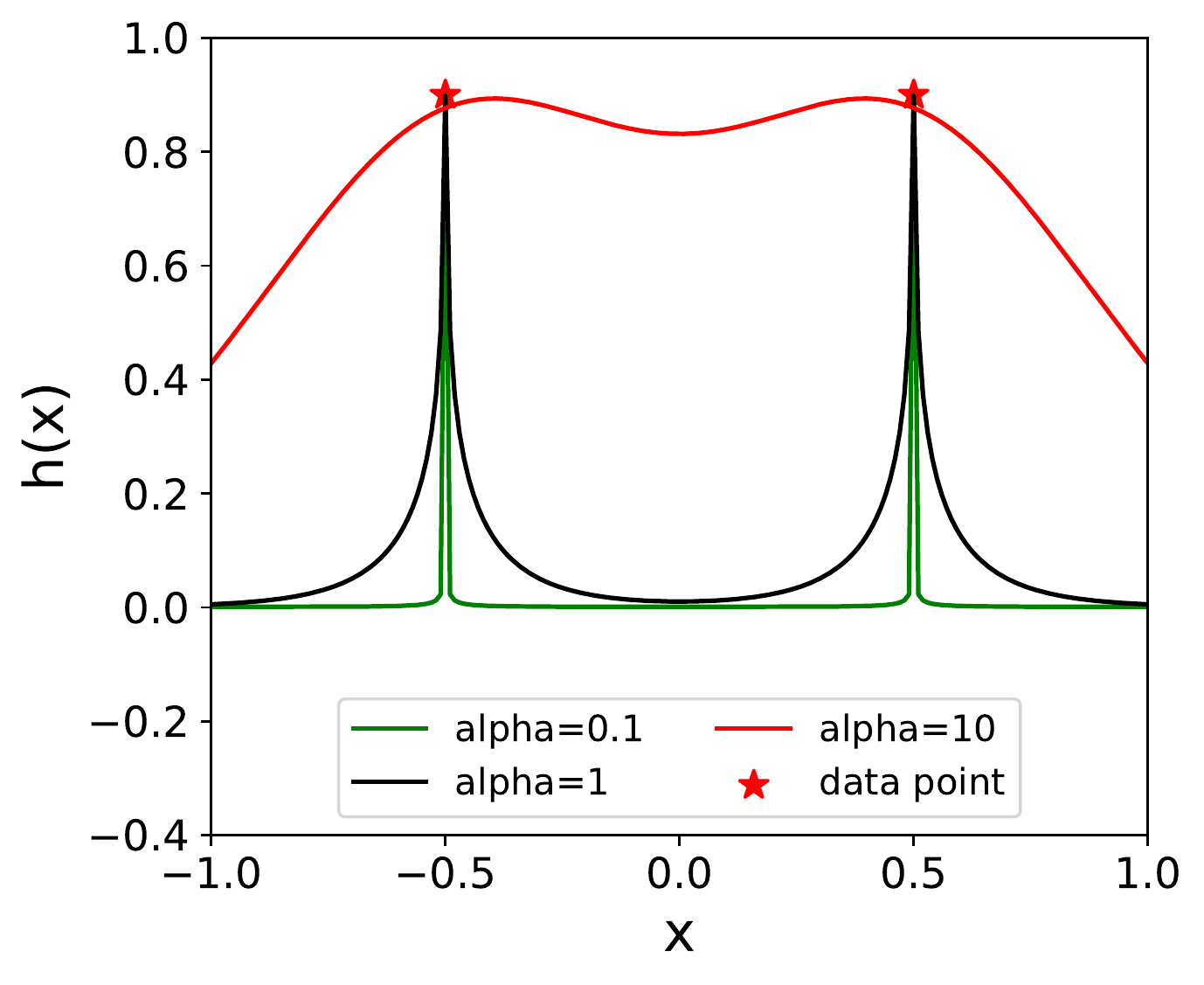}
	}
	\subfigure[20 points in 2 dimension]{
		\includegraphics[width=0.5\textwidth]{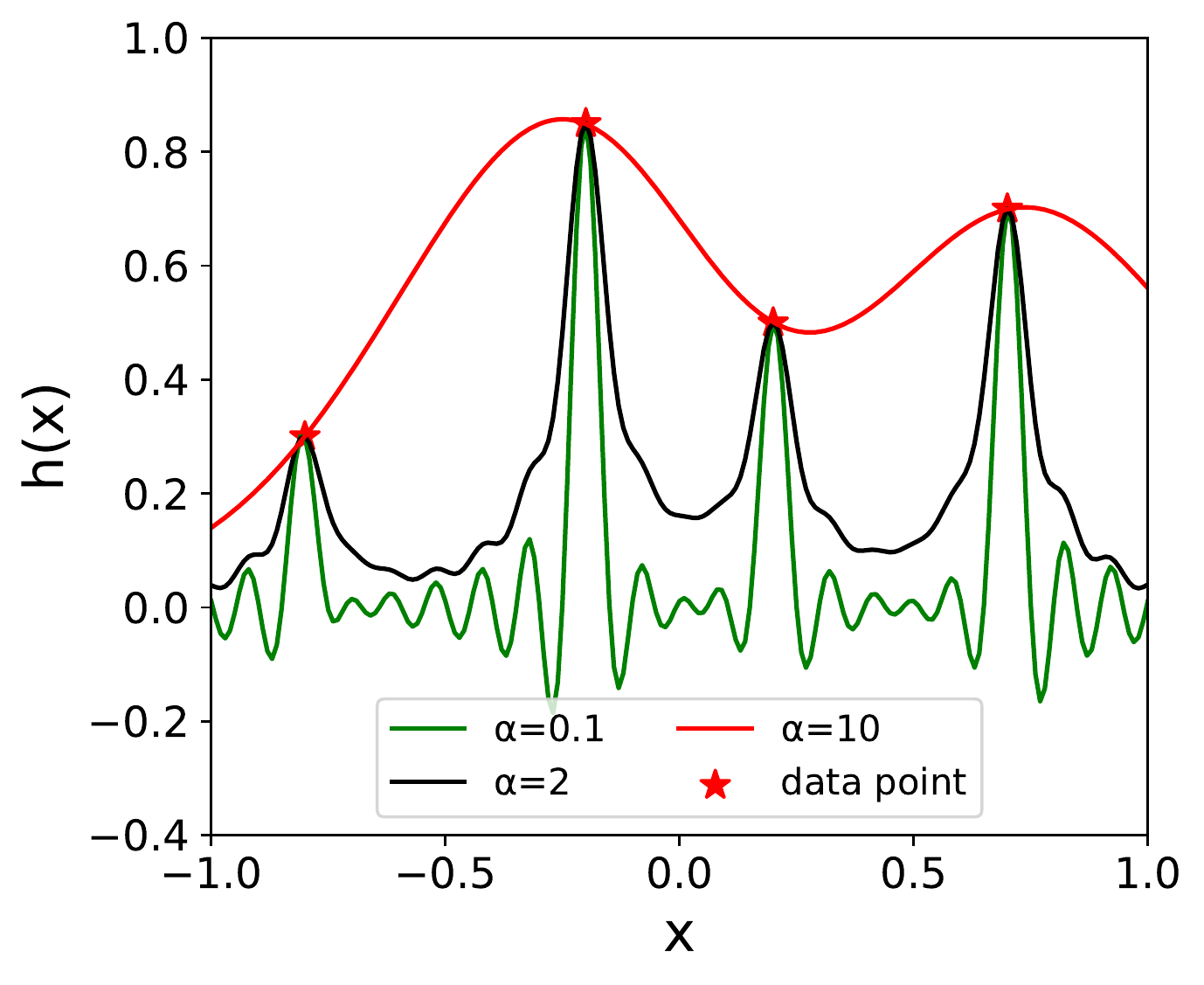}
	}
	\caption{Fitting data points in different dimensions with different exponent $\alpha$'s. We observe that with a proper $M$, the function $h(x)$ is not a trivial function for $\alpha>d$ case and degenerates to a trivial function for $\alpha>d$ case.}
	\label{fig:diff_alpha}
\end{figure}

\section{Conclusion \label{sec:conclusion}}
In this paper, we study the supervised learning problem by proposing a Fourier-domain variational formulation motivated by the frequency principle in deep learning. We establish the sufficient and necessary conditions for the well-posedness of the Fourier-domain variational problem, followed by numerical demonstration.

Our Fourier-domain variational formulation provides a novel viewpoint for modelling machine learning problem, that is, imposing more constraints, e.g., higher regularity, on the model rather than the data (always isolated points in practice) can give us the well-posedness as dimension of the problem increases. This is different from the modelling in physics and traditional point cloud problems, in which the model is independent of dimension in general. Our work suggests a potential approach of algorithm design by considering a dimension-dependent model for data modelling. 

In contrast to the natural sciences, where models are usually derived from fundamental physical laws, in data science, the story may be totally different: we may propose mathematical models based on the algorithms of great practical success like DNNs. Afterwards, the continuum formulation and its mathematical validation can be analyzed. This seems to be a new scientific paradigm, along which our work plays a role as one step.

%---------------------- 主体结束 ---------------------------------
% Acknowledgments---Will not appear in anonymized version
% \acks{Z.X. is supported by National Key R\&D Program of China (2019YFA0709503), Shanghai Sailing Program, Natural Science Foundation of Shanghai (20ZR1429000), NSFC 62002221. This work is partially supported by HPC of School of Mathematical Sciences and Student Innovation Center at Shanghai Jiao Tong University.}

\bibliography{DLRef_1}

\appendix

\section{Lemma~\ref{lemma1}}
\begin{lem}\label{lemma1}
	Let the function $\psi_{\sigma}(\vxi)=(2\pi)^{\frac{d}{2}}\sigma^d \E^{-2\pi^2\sigma^2\norm{\vxi}^2}$, $\vxi\in\sR^d$. We have
	\begin{equation}\label{lemmaeq}
	\lim_{\sigma\to0}\int_{\sR^d}\norm{\vxi}^{\alpha}\abs{\psi_{\sigma}(\vxi)}^{2}\diff{\vxi}
	=\begin{cases}
	0,      & \alpha<d, \\
	C_d,      & \alpha=d, \\
	\infty, & \alpha>d.
	\end{cases}
	\end{equation}
	Here the constant $C_d=\frac{1}{2}(d-1)!(2\pi)^{-d}\frac{2\pi^{d/2}}{\Gamma\left(d/2\right)}$ only depends on the dimension $d$.
\end{lem}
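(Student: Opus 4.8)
The plan is to reduce everything to a single scaling identity plus one standard Gaussian-moment computation. First I would make the integrand explicit: since $\abs{\psi_{\sigma}(\vxi)}^{2}=(2\pi)^{d}\sigma^{2d}\E^{-4\pi^{2}\sigma^{2}\norm{\vxi}^{2}}$, the quantity to analyze is $I(\sigma)=(2\pi)^{d}\sigma^{2d}\int_{\sR^d}\norm{\vxi}^{\alpha}\E^{-4\pi^{2}\sigma^{2}\norm{\vxi}^{2}}\diff\vxi$. The integrand is radially symmetric, so I would pass to spherical coordinates, writing $\omega_{d-1}=\frac{2\pi^{d/2}}{\Gamma(d/2)}$ for the surface area of the unit sphere in $\sR^d$, which gives $I(\sigma)=(2\pi)^{d}\sigma^{2d}\,\omega_{d-1}\int_{0}^{\infty}r^{\alpha+d-1}\E^{-4\pi^{2}\sigma^{2}r^{2}}\diff r$. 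The whole point is that all $\sigma$-dependence can now be stripped out by a one-dimensional rescaling.

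Concretely, I would substitute $t=2\pi\sigma r$, so that $\diff r=(2\pi\sigma)^{-1}\diff t$ and $r^{\alpha+d-1}=(2\pi\sigma)^{-(\alpha+d-1)}t^{\alpha+d-1}$, while the exponent becomes $-t^{2}$. Collecting the powers of $2\pi$ and $\sigma$ collapses the prefactor to $(2\pi)^{-\alpha}\sigma^{d-\alpha}$, leaving $I(\sigma)=(2\pi)^{-\alpha}\sigma^{d-\alpha}\,\omega_{d-1}\,J_{\alpha,d}$, where $J_{\alpha,d}=\int_{0}^{\infty}t^{\alpha+d-1}\E^{-t^{2}}\diff t$ is a finite positive constant independent of $\sigma$ (convergence at the origin requires only $\alpha+d-1>-1$, i.e. $\alpha>-d$, and the Gaussian tail controls infinity). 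The trichotomy is then immediate from the single exponent $d-\alpha$: as $\sigma\to0$ the factor $\sigma^{d-\alpha}$ tends to $0$ when $\alpha<d$, equals $1$ when $\alpha=d$, and diverges when $\alpha>d$, which is exactly the claimed limit in the three regimes.

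It remains to pin down the constant in the critical case $\alpha=d$, where $I(\sigma)\to C_{d}=(2\pi)^{-d}\,\omega_{d-1}\,J_{d,d}$. I would evaluate $J_{d,d}=\int_{0}^{\infty}t^{2d-1}\E^{-t^{2}}\diff t$ by the substitution $s=t^{2}$, which turns it into $\frac{1}{2}\int_{0}^{\infty}s^{d-1}\E^{-s}\diff s=\frac{1}{2}\Gamma(d)=\frac{1}{2}(d-1)!$. Multiplying by $\omega_{d-1}=\frac{2\pi^{d/2}}{\Gamma(d/2)}$ and by $(2\pi)^{-d}$ reproduces precisely $C_{d}=\frac{1}{2}(d-1)!(2\pi)^{-d}\frac{2\pi^{d/2}}{\Gamma(d/2)}$. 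I do not expect a genuine obstacle here, since the argument is entirely a scaling computation; the only point demanding a moment of care is verifying that $J_{\alpha,d}$ is finite and strictly positive, so that the limiting behaviour is dictated purely by the power $\sigma^{d-\alpha}$ with no hidden degeneracy of the constant. That finiteness and positivity follow directly from the integrability bound $\alpha>-d$ noted above, which holds throughout the regime of interest.
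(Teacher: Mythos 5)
Your proof is correct and follows essentially the same route as the paper's: pass to spherical coordinates, rescale to extract the factor $\sigma^{d-\alpha}$ times a $\sigma$-independent radial integral, and read off the trichotomy, with the critical constant evaluated via the Gamma integral. The only (cosmetic) difference is that you absorb the full $2\pi\sigma$ into the substitution and identify $J_{d,d}=\tfrac{1}{2}\Gamma(d)$ directly, whereas the paper keeps $\E^{-4\pi^2 r^2}$ and checks finiteness of the radial integral by a slightly more laborious splitting and parity case analysis; both yield the same $C_d$.
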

\begin{proof} % proof of Lemma 1
	In fact, 
	\begin{align*}
	\lim\limits_{\sigma\rightarrow 0}  \int_{\sR^d}\norm{\vxi}^\alpha|\psi_\sigma(\vxi)|^2\diff\vxi
	&=\lim\limits_{\sigma\rightarrow 0}  \int_{\sR^d}\norm{\vxi}^\alpha(2\pi)^d\sigma^{2d}\E^{-4\pi^2\sigma^2\norm{\vxi}^2}\diff\vxi\\
	&=\lim\limits_{\sigma\rightarrow 0}  (2\pi)^d  \sigma^{d-\alpha}  \int_{\sR^d}  \norm{\sigma\vxi}^\alpha  \E^{-4\pi^2\norm{\sigma\vxi}^2}\diff{(\sigma\vxi)}\\
	&=\lim\limits_{\sigma\rightarrow 0}  (2\pi)^d  \sigma^{d-\alpha}  \int_0^\infty r^{\alpha+d-1}  \E^{-4\pi^2r^2}\diff r\cdot \omega_d,
	\end{align*}
	where $\omega_d=\frac{2\pi^{\frac{d}{2}}}{\Gamma\left(\frac{d}{2}\right)}$ is the surface area of a unit $(d-1)$-sphere.
	
	%不等式估计
	\noindent Notice that 
	\begin{align*}
	\int_0^\infty r^{\alpha+d-1}  \E^{-4\pi^2r^2}\diff r&= 
	\int_0^1 r^{\alpha+d-1}  \E^{-4\pi^2r^2}\diff r + \int_1^\infty r^{\alpha+d-1}  \E^{-4\pi^2r^2}\diff r\\
	&\le \int_0^\infty \E^{-4\pi^2r^2}\diff r + \int_0^\infty r^{[\alpha]+d}  \E^{-4\pi^2r^2}\diff r\\
	&=\frac{1}{8\pi^{\frac{3}{2}}} + \int_0^\infty r^{[\alpha]+d}  \E^{-4\pi^2r^2}\diff r
	\end{align*}
	and
	\begin{equation*}
	\int_0^\infty r^{[\alpha]+d}  \E^{-4\pi^2r^2}\diff r=
	\begin{cases}
	\frac{1}{2}\left(\frac{[\alpha]+d-1}{2}\right)!(2\pi)^{-([\alpha]+d+1)}, & [\alpha]+d\mathrm{\ is\ odd},\\
	\frac{\sqrt{\pi}}{2}(2\pi)^{-([\alpha]+d+1)}(\frac{1}{2})^{\frac{[\alpha]+d}{2}}([\alpha]+d-1)!!, & [\alpha]+d\mathrm{\ is\ even}.
	\end{cases}
	\end{equation*}
	Therefore, in both cases, the integral $\int_0^\infty r^{\alpha+d-1}  \E^{-4\pi^2r^2}\diff r$ is finite. Then we have 
	\begin{align*}
	\lim\limits_{\sigma\rightarrow 0}  \int_{\sR^d}\norm{\vxi}^\alpha|\psi_\sigma(\vxi)|^2\diff\vxi&=\lim\limits_{\sigma\rightarrow 0}  (2\pi)^d  \sigma^{d-\alpha}  \int_0^\infty r^{\alpha+d-1}  \E^{-4\pi^2r^2}\diff r\cdot \omega_d\\
	&=\begin{cases}
	0,      & \alpha<d, \\
	\infty, & \alpha>d.
	\end{cases}
	\end{align*}
	
	\noindent When $\alpha=d$, it follows that
	\begin{equation*}
	\int_0^\infty r^{\alpha+d-1}  \E^{-4\pi^2r^2}\diff r
	=\frac{1}{2}(2\pi)^{-2d}(d-1)!.
	\end{equation*}
	
	\noindent Therefore
	\begin{equation*}
	\lim\limits_{\sigma\rightarrow 0}  \int_{\sR^d}\norm{\vxi}^\alpha|\psi_\sigma(\xi)|^2\diff\xi
	=\frac{1}{2}(d-1)!(2\pi)^{-d}\frac{2\pi^{\frac{d}{2}}}{\Gamma\left(\frac{d}{2}\right)},
	\end{equation*}
	which completes the proof.
\end{proof}

\section{Proof of Proposition~\ref{prop..CriticalExponent}}

\begin{proof}  % proof of proposition
	Similar to the proof of Lemma \ref{lemma1}, we have
	\begin{align*}
	\lim\limits_{\sigma\rightarrow 0}  \norm{\fF^{-1}[\psi_\sigma]}_{H^{\frac{\alpha}{2}}}^2
	&=\lim\limits_{\sigma\rightarrow 0}  (2\pi)^d  \sigma^{d-\alpha}  \int_{\sR^d}  (\sigma^2+\norm{\sigma\vxi}^2)^{\frac{\alpha}{2}}  \E^{-4\pi^2\norm{\sigma\vxi}^2}\diff(\sigma\vxi)\\
	&=\lim\limits_{\sigma\rightarrow 0}  (2\pi)^d  \sigma^{d-\alpha}  \int_0^\infty  r^{d-1}(\sigma^2+r^2)^{\frac{\alpha}{2}}  \E^{-4\pi^2r^2}\diff r\cdot \omega_d.\\
	\end{align*}
	For $\sigma<1$, the following integrals are bounded from below and above, respectively:
	\begin{equation*}
	\int_0^\infty  r^{d-1}(\sigma^2+r^2)^{\frac{\alpha}{2}}  \E^{-4\pi^2r^2}\diff r \ge \int_0^\infty  r^{\alpha+d-1}  \E^{-4\pi^2r^2}\diff r = C_1 >0,
	\end{equation*}
	and
	\begin{align*} 
	\int_0^\infty  r^{d-1}(\sigma^2+r^2)^{\frac{\alpha}{2}}  \E^{-4\pi^2r^2}\diff r &\le
	\int_0^1 r^{d-1}(1+r^2)^{\frac{\alpha}{2}}  \E^{-4\pi^2r^2}\diff r + 
	\int_1^\infty r^{d-1}((2r)^2)^{\frac{\alpha}{2}}  \E^{-4\pi^2r^2}\diff r \\&\le
	\int_0^1 r^{d-1}(1+r^2)^{\frac{\alpha}{2}}  \E^{-4\pi^2r^2}\diff r +
	2^\alpha\int_0^\infty r^{\alpha+d-1}  \E^{-4\pi^2r^2}\diff r\\&= C_2 < \infty,
	\end{align*}
	where $C_1=\int_0^\infty  r^{\alpha+d-1}  \E^{-4\pi^2r^2}\diff r$ and $C_2=\int_0^1 r^{d-1}(1+r^2)^{\frac{\alpha}{2}}  \E^{-4\pi^2r^2}\diff r +
	2^\alpha\int_0^\infty r^{\alpha+d-1}  \E^{-4\pi^2r^2}\diff r$. Therefore, we obtain the results for the subcritical ($\alpha<d$) and supercritical ($\alpha>d$) cases
	\begin{align*}
	\lim\limits_{\sigma\rightarrow 0}  \norm{\fF^{-1}[\psi_\sigma]}_{H^{\frac{\alpha}{2}}}^2
	&=\lim\limits_{\sigma\rightarrow 0}  (2\pi)^d  \sigma^{d-\alpha}  \int_0^\infty  r^{d-1}(\sigma^2+r^2)^{\frac{\alpha}{2}}  \E^{-4\pi^2r^2}\diff r\cdot \omega_d\\
	&=\begin{cases}
	0,      & \alpha<d, \\
	\infty, & \alpha>d.
	\end{cases}
	\end{align*}
	
	\noindent For the critical case $\alpha=d$, we have
	\begin{align*}
	&\lim\limits_{\sigma\rightarrow 0}  \norm{\fF^{-1}[\psi_\sigma]}_{H^{\frac{\alpha}{2}}}^2\\
	&=\lim\limits_{\sigma\rightarrow 0}  (2\pi)^d \int_0^\infty  r^{d-1}(\sigma^2+r^2)^{\frac{\alpha}{2}}  \E^{-4\pi^2r^2}\diff r\cdot \omega_d\\
	&=\lim\limits_{\sigma\rightarrow 0}  (2\pi)^d \int_0^\infty  r^{2d-1}  \E^{-4\pi^2r^2}\diff r\cdot \omega_d + \lim\limits_{\sigma\rightarrow 0}\left[\frac{\alpha}{2}(2\pi)^d \sigma^2\int_0^\infty r^{2d-3}\E^{-4\pi^2r^2}\diff r\cdot \omega_d+o(\sigma^2)\right]\\
	&=\lim\limits_{\sigma\rightarrow 0}  (2\pi)^d \int_0^\infty  r^{2d-1}  \E^{-4\pi^2r^2}\diff r\cdot \omega_d\\
	&=\frac{1}{2}(d-1)!(2\pi)^{-d}\frac{2\pi^{\frac{d}{2}}}{\Gamma\left(\frac{d}{2}\right)}.
	\end{align*}
	\noindent Therefore the proposition holds.
\end{proof}

\section{Proof of Theorem~\ref{thm..alpha<d}}

\begin{proof} % proof of alpha<d
	Given $\vX=(\vx_{1},\ldots,\vx_{n})^\T$ and $\vY=(y_{1},\ldots,y_{n})^\T$, let $\mA=\left(\exp(-\frac{\norm{\vx_j-\vx_i}^2}{2\sigma^2})\right)_{n\times n}$ be an $n\times n$ matrix. For sufficiently small $\sigma$, the matrix $\mA$ is diagonally dominant, and hence invertible. So the linear system $\mA\vg^{(\sigma)}=\vY$ has a solution $\vg^{(\sigma)}=\left(g^{(\sigma)}_1,g^{(\sigma)}_2,\cdots,g^{(\sigma)}_n\right)^\T$. Let
	\begin{equation*}
	\phi_{\sigma}(\vxi)
	=\sum_{i}g^{(\sigma)}_{i}\E^{-2\pi\I\vxi^\T\vx_i}\psi_{\sigma}(\vxi),
	\end{equation*}
	\noindent where $\psi_{\sigma}(\vxi)=(2\pi)^{\frac{d}{2}}\sigma^d\E^{-2\pi^2\sigma^2\norm{\vxi}^2}$ satisfying $\fF^{-1}[\psi_{\sigma}](\vx)=\E^{-\frac{\norm{\vx}^2}{2\sigma^2}}$. Thus
	\begin{equation*}
	\fF^{-1}[\phi_{\sigma}](\vx)
	=\sum_{i}g^{(\sigma)}_{i}\fF^{-1}[\psi_{\sigma}](\vx-\vx_{i})
	=\sum_{i}g^{(\sigma)}_{i}\E^{-\frac{\norm{\vx-\vx_{i}}^2}{2\sigma^2}}.
	\end{equation*}
	In particular, for all $i=1,2,\cdots,n$
	\begin{equation*}
	\fF^{-1}[\phi_{\sigma}](\vx_i)
	=\sum_{j}g^{(\sigma)}_{j}\E^{-\frac{\norm{\vx_i-\vx_{j}}^2}{2\sigma^2}}=(\mA\vg^{(\sigma)})_i=y_i.
	\end{equation*}
	\noindent Therefore, $\phi_{\sigma}\in \fA_{\vX,\vY}$ for sufficiently small $\sigma>0$.
	
	According to the above discussion, we can construct a sequence $\{\phi_{\frac{1}{m}}\}_{m=M}^\infty\subset \fA_{\vX,\vY}$, where $M$ is a sufficiently large positive integer to make the matrix $\mA$ invertible. As Proposition~\ref{prop..CriticalExponent} shows,
	\begin{equation*}
	\lim_{m\to+\infty}\norm{\fF^{-1}[\phi_{\frac{1}{m}}]}_{H^{\frac{\alpha}{2}}}^2=0.
	\end{equation*}
	Now, suppose that there exists a solution to the Problem \ref{prob..VariationalPointCloud}, denoted as $\phi^*\in \fA_{\vX,\vY}$. By definition,
	\begin{equation*}
    	\norm{\fF^{-1}[\phi^*]}_{H^{\frac{\alpha}{2}}}^2
    	\leq \min_{\phi\in \fA_{\vX,\vY}} \norm{\fF^{-1}[\phi]}_{H^{\frac{\alpha}{2}}}^2
    	\le\lim_{m\to+\infty}\norm{\fF^{-1}[\phi_{\frac{1}{m}}]}_{H^{\frac{\alpha}{2}}}^2=0.
	\end{equation*}
	Therefore, $\phi^*(\vxi)\equiv 0$ and $\fP_{\vX}\phi^*=\vzero$, which contradicts to the restrictive condition $\fP_{\vX}\phi^*=\vY$ for the situation that $\vY\neq \vzero$. The proof is completed.
\end{proof}

\section{Proof of Theorem~\ref{thm..alpha>d}}

\begin{proof}    % proof of alpha>d
    \noindent 1.
    We introduce a distance for functions $\phi,\psi\in L^2(\sR^d)$:
    \begin{equation*}
        \dist(\phi,\psi)=\norm{\fF^{-1}[\phi]-\fF^{-1}[\psi]}_{H^{\frac{\alpha}{2}}}.
    \end{equation*}
    Under the topology induced by this distance, the closure of the admissible function class $\fA_{\vX,\vY}$ reads as
    \begin{equation*}
        \overline{\fA_{\vX,\vY}}:=\overline{\{\phi\in L^1(\sR^d)\cap L^2(\sR^d)\mid\fP_{\vX}\phi=\vY\}}^{\mathrm{dist}(\cdot,\cdot)}.
    \end{equation*}
    \noindent 2.
    We will consider an auxiliary minimization problem: to find $\phi^*$ such that
    \begin{equation}\label{eq..AuxiliaryMinProb}
        \phi^*\in\arg\min_{\phi\in \overline{\fA_{\vX,\vY}}}\norm{\fF^{-1}[\phi]}_{H^{\frac{\alpha}{2}}}.
    \end{equation}
    Let $m:=\inf_{\phi\in \overline{\fA_{\vX,\vY}}}\norm{\fF^{-1}[\phi]}_{H^{\frac{\alpha}{2}}}$.
% 	Let $\vPhi=\{\phi\in \fA_{\vX,\vY}\mid \norm{\fF^{-1}[\phi]}_{H^{\frac{\alpha}{2}}}^2<\infty\}$. 
	According to the proof of Proposition~\ref{prop..CriticalExponent} and Theorem~\ref{thm..alpha<d}, for a small enough $\sigma>0$, the inverse Fourier transform of function 
	\begin{equation*}
	\phi_{\sigma}(\vxi)
	=\sum_{i}g^{(\sigma)}_{i}\E^{-2\pi\I\vxi^\T\vx_i}\psi_{\sigma}(\vxi)
	\end{equation*}
	has finite Sobolev norm $\norm{\fF^{-1}[\phi_\sigma]}_{H^{\frac{\alpha}{2}}}<\infty$, where $\psi_{\sigma}(\vxi)$ satisfies $\fF^{-1}[\psi_{\sigma}](\vx)=\E^{-\frac{\norm{\vx}^2}{2\sigma^2}}$, $\mA=\left(\exp(-\frac{\norm{\vx_j-\vx_i}^2}{2\sigma^2})\right)_{n\times n}$ and  $\vg^{(\sigma)}=\left(g^{(\sigma)}_1,g^{(\sigma)}_2,\cdots,g^{(\sigma)}_n\right)^\T=\mA^{-1}\vY$. Thus $m<+\infty$.

% 	We will consider this problem in two situations:
	
% 	\noindent $Case\ 1.$ If there exist a function $\phi^*(\vxi)$ such that
% 	\begin{equation*}
% 	\begin{cases}
% 	\norm{\fF^{-1}[\phi^*]}_{H^{\frac{\alpha}{2}}}=\min_{\phi\in \fA_{\vX,\vY}}\norm{\fF^{-1}[\phi]}_{H^{\frac{\alpha}{2}}}, \\
% 	\fP_{\vX}\phi^*=\vY.
% 	\end{cases}
% 	\end{equation*}
% 	Then by Sobolev embeding theorem,
% 	\begin{equation*}
% 	\fF^{-1}[{\phi}^*]\in H^{\frac{\alpha}{2}}(\sR^d) \hookrightarrow C^{j}(\sR^d),
% 	\end{equation*}
% 	where the nonnegative integer $j=[\frac{\alpha-d}{2}]$. 
	
% 	\noindent $Case\ 2.$ 
	\noindent 3.
	Choose a minimizing sequence $\{\bar{\phi}_k\}_{k=1}^\infty\subset \overline{\fA_{\vX,\vY}}$ such that
	\begin{equation*}
        \lim_{k\rightarrow \infty} \norm{\fF^{-1}[\bar{\phi}_k]}_{H^{\frac{\alpha}{2}}} =m.
	\end{equation*}
	By definition of the closure, there exists a function $\phi_k\in\fA_{\vX,\vY}$ for each $k$ such that
	\begin{equation*}
	    \norm{\fF^{-1}[\bar{\phi}_k]-\fF^{-1}[\phi_k]}_{H^{\frac{\alpha}{2}}}\leq \frac{1}{k}.
	\end{equation*}
	Therefore $\{\phi_k\}_{k=1}^\infty\subset \fA_{\vX,\vY}$ is also a minimizing sequence, i.e.,
	\begin{equation*}
        \lim_{k\rightarrow \infty} \norm{\fF^{-1}[\phi_k]}_{H^{\frac{\alpha}{2}}} =m.
	\end{equation*}	
	Then $\{\fF^{-1}[\phi_k]\}_{k=1}^\infty$ is bounded in the Sobolev space $H^{\frac{\alpha}{2}}(\sR^d)$. Hence there exist a weakly convergent subsequence $\{\fF^{-1}[\phi_{n_k}]\}_{k=1}^\infty$ and a function $\fF^{-1}[\phi^*]\in H^{\frac{\alpha}{2}}(\sR^d)$ such that
	\begin{equation*} 
	\fF^{-1}[\phi_{n_k}]\rightharpoonup \fF^{-1}[\phi^*] \quad\text{in } H^{\frac{\alpha}{2}}(\sR^d)\text{ as}\  k\rightarrow \infty.
	\end{equation*}
	Note that
	\begin{equation*} 
    m=\inf_{\phi\in \overline{\fA_{\vX,\vY}}}\norm{\fF^{-1}[\phi]}_{H^{\frac{\alpha}{2}}}\le 
	\norm{\fF^{-1}[\phi^*]}_{H^{\frac{\alpha}{2}}}\le\liminf_{\phi_{n_k}}\norm{\fF^{-1}[\phi_{n_k}]}_{H^{\frac{\alpha}{2}}}=m,
	\end{equation*}
	where we have used the lower semi-continuity of the Sobolev norm of $H^{\frac{\alpha}{2}}(\sR^d)$ in the third inequality. Hence $\norm{\fF^{-1}[\phi^*]}_{H^{\frac{\alpha}{2}}}=m$.
	
    \noindent 4. We further establish the strong convergence that
    $\fF^{-1}[\phi_{n_k}]-\fF^{-1}[\phi^*]\rightarrow 0$ in $H^{\frac{\alpha}{2}}(\sR^d)$ as $k\rightarrow \infty$.
%     By Sobolev embeding theorem, 
% 	\begin{equation*} 
% 	\fF^{-1}[{\phi}_{n_k}]\in H^{\frac{\alpha}{2}}(\sR^d) \hookrightarrow C^{j}(\sR^d), 
% 	\end{equation*}
% 	\begin{equation*} 
% 	\fF^{-1}[{\phi}^*]\in H^{\frac{\alpha}{2}}(\sR^d) \hookrightarrow C^{j}(\sR^d), 
% 	\end{equation*}
% 	where the nonnegative integer $j=[\frac{\alpha-d}{2}]$.
	In fact, since $\fF^{-1}[\phi_{n_k}]\rightharpoonup \fF^{-1}[\phi^*] \ \text{in } H^{\frac{\alpha}{2}}(\sR^d)\text{ as}\ k\rightarrow \infty$ and $\lim_{k\rightarrow \infty}\norm{\fF^{-1}[\phi_{n_k}]}_{H^{\frac{\alpha}{2}}}=m=\norm{\fF^{-1}[\phi^*]}_{H^{\frac{\alpha}{2}}}$, we have
     \begin{align*}
     	&\lim_{k\to \infty}\norm{\fF^{-1}[\phi_{n_k}]-\fF^{-1}[\phi^*]}_{H^{\frac{\alpha}{2}}}^2=\lim_{k\to \infty} \langle\fF^{-1}[\phi_{n_k}]-\fF^{-1}[\phi^*],\fF^{-1}[\phi_{n_k}]-\fF^{-1}[\phi^*]\rangle\\
     	&=\lim_{k\to \infty} \langle\fF^{-1}[\phi_{n_k}],\fF^{-1}[\phi_{n_k}]\rangle+ \langle\fF^{-1}[\phi^*],\fF^{-1}[\phi^*]\rangle- \langle\fF^{-1}[\phi_{n_k}],\fF^{-1}[\phi^*]\rangle- \langle\fF^{-1}[\phi^*],\fF^{-1}[\phi_{n_k}]\rangle\\
     	&=m^2+m^2- \lim_{k\to \infty}\left(\langle\fF^{-1}[\phi_{n_k}],\fF^{-1}[\phi^*]\rangle+ \langle\fF^{-1}[\phi^*],\fF^{-1}[\phi_{n_k}]\rangle\right)\\
     	&=m^2+m^2-\langle\fF^{-1}[\phi^*],\fF^{-1}[\phi^*]\rangle-\langle\fF^{-1}[\phi^*],\fF^{-1}[\phi^*]\rangle=0.
     \end{align*}
     Here $\langle\cdot,\cdot \rangle$ is the inner product of the Hilbert space $H^{\frac{\alpha}{2}}$.
     
	\noindent 5. 
	We have $\phi^*\in L^1(\sR^d)$ because
	\begin{align*}
		\int_{\sR^d}\abs{\phi^*(\vxi)}\diff \vxi =\int_{\sR^d}\frac{\langle\vxi\rangle^{\frac{\alpha}{2}}\abs{\phi^*(\vxi)}}{\langle\vxi\rangle^{\frac{\alpha}{2}}}\diff \vxi
		\le\norm{\fF^{-1}[\phi^*]}_{H^{\frac{\alpha}{2}}} \left(\int_{\sR^d}\frac{1}{\langle\vxi\rangle^\alpha}\diff\vxi\right)^{\frac{1}{2}}=Cm<+\infty,
	\end{align*}
	where $C:=\left(\int_{\sR^d}\frac{1}{\langle\vxi\rangle^\alpha}\diff\vxi\right)^{\frac{1}{2}}<+\infty$. Hence $\phi^*\in L^1(\sR^d)\cap L^2(\sR^d)$ and $\fP_{\vX}\phi^*$ is well-defined.

	\noindent 6. Recall that $\fP_{\vX}\phi_{n_k}=\vY$. We have
	\begin{align*}
		\Abs{\vY-\fP_{\vX}\phi^*}
		&=\lim_{k\to+\infty}\Abs{\fP_{\vX}\phi_{n_k}-\fP_{\vX}\phi^*}\\
		&=
		\lim_{k\to+\infty}\Abs{\int_{\sR^d}(\phi_{n_k}-\phi^*)\E^{2\pi\I\vx\vxi}\diff\vxi}\\
		&=\lim_{k\to+\infty}\Abs{\int_{\sR^d}\frac{\langle\vxi\rangle^{\frac{\alpha}{2}}(\phi_{n_k}-\phi^*)}{\langle\vxi\rangle^{\frac{\alpha}{2}}}\E^{2\pi\I\vx\vxi} \diff\vxi}\\
		&\le\lim_{k\to+\infty}\norm{\fF^{-1}[\phi_{n_k}]-\fF^{-1}[\phi^*]}_{H^{\frac{\alpha}{2}}} \left(\int_{\sR^d}\frac{\Abs{\E^{2\pi\I\vx\vxi}}^2}{\langle\vxi\rangle^\alpha}\diff\vxi\right)^{\frac{1}{2}}\\
		&=C\lim_{k\to+\infty}\norm{\fF^{-1}[\phi_{n_k}]-\fF^{-1}[\phi^*]}_{H^{\frac{\alpha}{2}}}=0.
	\end{align*}
	Hence $\fP_{\vX}\phi^*=\vY$ and $\phi^*\in \fA_{\vX,\vY}$.
	
	\noindent 7. 
	Note that
	\begin{equation*}
	    m=\inf_{\phi\in \overline{\fA_{\vX,\vY}}}\norm{\fF^{-1}[\phi]}_{H^{\frac{\alpha}{2}}}
	    \leq
	    \inf_{\phi\in \fA_{\vX,\vY}}\norm{\fF^{-1}[\phi]}_{H^{\frac{\alpha}{2}}}
	    \leq\norm{\fF^{-1}[\phi^*]}_{H^{\frac{\alpha}{2}}}
	    =m.
	\end{equation*}
	This implies that $\inf_{\phi\in \fA_{\vX,\vY}}\norm{\fF^{-1}[\phi]}_{H^{\frac{\alpha}{2}}}=m$ and
	$
        \phi^*\in\arg\min_{\phi\in \fA_{\vX,\vY}}\norm{\fF^{-1}[\phi]}_{H^{\frac{\alpha}{2}}}
    $, which completes the proof.
\end{proof}

\end{document}